\documentclass[12pt,reqno]{amsart}
\usepackage{ae}
\usepackage[all]{xy}
\usepackage{graphicx,amsfonts,amssymb,amsmath}
\usepackage{pifont}
\usepackage{tikz-cd}
\usepackage{enumitem}

\usepackage{extpfeil} 

\usepackage[backref]{hyperref}
\hypersetup{colorlinks,linkcolor=blue,citecolor=red,}
\usepackage[alphabetic,backrefs,lite]{amsrefs}

\usepackage{geometry}
\geometry{a4paper,left=2.5cm,right=2.5cm,top=3cm,bottom=3cm}

\usepackage[OT2,T1]{fontenc}
\DeclareSymbolFont{cyrletters}{OT2}{wncyr}{m}{n}
\DeclareMathSymbol{\sha}{\mathalpha}{cyrletters}{"58}

\input cyracc.def

\vfuzz2pt

 \newtheorem{thm}{Theorem}[section]
  \newtheorem*{ques1}{Question 1}
\newtheorem*{ques2}{Question 2}

 \newtheorem{cor}[thm]{Corollary}
 \newtheorem{lem}[thm]{Lemma}
 
 \newtheorem{prop}[thm]{Proposition}
 \theoremstyle{definition}
 \newtheorem{defn}[thm]{Definition}
 \theoremstyle{remark}
 
 \theoremstyle{remark}
 \newtheorem{rem}[thm]{Remark}

 \newcommand{\To}{\longrightarrow}

 \newcommand{\N}{\textup{N}}
 \newcommand{\F}{\mathbb{F}}

 \newcommand{\coker}{\textup{Coker}}
 \newcommand{\im}{\textup{Im}}
 \renewcommand{\ker}{\textup{Ker}}
 \newcommand{\Pic}{\textup{Pic}}

 \newcommand{\Br}{\textup{Br}}

 \newcommand{\Gal}{\textup{Gal}}

  \newcommand{\Spec}{\textup{Spec}}

 \renewcommand{\P}{\mathbb{P}}

 \newcommand{\A}{\textbf{A}}
 \newcommand{\Q}{\mathbb{Q}}
 
  \newcommand{\C}{\mathbb{C}}
 \newcommand{\Z}{\mathbb{Z}}
 \newcommand{\G}{\mathbb{G}}

 \newcommand{\br}{\mathrm{Br}}
 \newcommand{\inv}{\mathrm{inv}}
 \newcommand{\HH}{\mathrm{H}}
 \newcommand{\PP}{\mathbb{P}}
 \newcommand{\Af}{\mathbb{A}}
 \newcommand{\Ad}{\mathbf{A}}
 
 \newcommand{\UU}{\mathcal{U}}

\numberwithin{equation}{section}

\begin{document}

\title[]
 {Varieties with prescribed finite unramified Brauer groups and subgroups precisely obstructing the Hasse principle}

\author{Yongqi Liang}
\author{Yufan Liu}

\address{Yongqi LIANG
\newline University of Scinece and Technology of China,
\newline School of Mathematical Sciences,
\newline 96 Jinzhai Road,
\newline  230026 Hefei, Anhui, China
 }

\email{yqliang@ustc.edu.cn}

\address{Yufan Liu
\newline University of Scinece and Technology of China,
\newline School of Mathematical Sciences,
\newline 96 Jinzhai Road,
\newline  230026 Hefei, Anhui, China
 }

\email{liuyufan@mail.ustc.edu.cn}

\keywords{normic bundles, Brauer group}
\thanks{\textit{MSC 2020} : 14G12, 14G05, 14F22}

\begin{abstract}
On varieties defined over number fields, we consider obstructions to the Hasse principle
given by subgroups of their Brauer groups.

Given an arbitrary pair of non-zero finite abelian groups $B_0\subset B$, we prove the existence of a variety $X$ such that its unramified Brauer group is isomorphic to $B$ and moreover $B_0$ is the smallest subgroup of $B$ that obstructs the Hasse principle. The concerned varieties   are normic bundles over the projective line.
\end{abstract}

\maketitle

\section{Introduction}

For algebraic varieties $X$ defined over number fields $k$, a fundamental problem in arithmetic geometry is to determine the existence of $k$-rational points on $X$. A necessary condition of such an existence is the existence of $k_v$-rational points for all places $v\in\Omega_k$ of $k$. If, for a certain class of varieties, this is also a sufficient condition, we say that the Hasse principle holds. It turns out that many varieties violate the Hasse principle. To explain these violations, in 1970s Manin \cite{Manin} introduced a (nowadays called Brauer\textendash Manin) pairing
$$\langle ~,~\rangle:X(\A_k)\times\Br(X)\to\Q/\Z$$
such that the diagonal image of $X(k)\to X(\A_k)$ is contained in the Brauer\textendash Manin subset
$$X(\A_k)^B=\{(x_v)_{v\in\Omega_k}\mid ~\forall b\in B, \langle (x_v),b\rangle=0\}$$
with respect to a subgroup $B\subset\Br(X)$ of the Brauer group. When $\varnothing=X(\A_k)^B\subsetneq X(\A_k)$, we say that the subgroup $B$ obstructs the Hasse principle. Colliot-Th\'el\`ene conjectured that violations of the Hasse principle for (geometrically) rationally connected varieties are due to the obstruction given by the Brauer group, cf. \cite{CTSkbook}*{Conjecture 14.1.2}.

In principle, the study of the Brauer\textendash Manin obstruction is divided into two steps. The first step is to compute the unramified  Brauer group of a geometrically connected smooth variety (or equivalently the Brauer group of its smooth compactifications). The second step is to determine whether an element in (a subgroup of) the Brauer group  obstructs the Hasse principle.

Concerning the first step, to compute the Brauer group is difficult in general, even though the Brauer group is known to be finite (modulo the constant part) for any rationally connected variety. The following question arises naturally.
\begin{ques1}
Given an arbitrary finite abelian group $B$, does there exist a rationally connected variety $X$ such that $\coker(\Br(k)\to\Br(X))\simeq B$?
\end{ques1}
In the literature, for most varieties with computable Brauer groups, one usually terminates at either a $p$-group for a certain prime number $p$ or a finite abelian group of small order. To the best of our knowledge, this question has not yet been seriously considered. As a side note,  T. J. Ford had an affirmative answer to the variant of this question in the class of  affine varieties defined over the field $\C$ of complex numbers, cf.  \cite{Ford}.

Concerning the second step, generally speaking, people want to understand which specific element of the Brauer group takes effect in the Brauer\textendash Manin pairing. When $X$ is proper, as the set $X(\A_k)$ of its ad\'elic points  is compact, the local constancy of the Brauer\textendash Manin pairing implies that $X(\A_k)^B=\varnothing$ for a finite subgroup $B\subset\Br(X)$ provided that $X(\A_k)^{\Br(X)}=\varnothing$. For most situations presented in the literature, it suffices to take $B$ to be generated by one single element. In a recent paper \cite{Berg-etal} of J. Berg et al., the authors constructed  conic bundles over the projective line, for which the Brauer\textendash Manin obstruction to the Hasse principle requires arbitrarily many Brauer classes. We refer to the introduction of their paper for more details and a brief history of this research direction. In the only examples
\cite{CT-SD}*{Proposition 10.1}\cite{KreschTschinkel}*{Example 7}\cite{Corn}*{Example 9.4}\cite{Berg-etal} requiring more than one Brauer class, the required Brauer classes are of order $2$. It is therefore natural to ask the following question, where the special case with $B_0$ any finite elementary abelian $2$-group has already been solved in \cite{Berg-etal}.
\begin{ques2}
Given an arbitrary non-zero finite abelian group $B_0$, does there exist a proper smooth geometrically connected variety $X$ violating the Hasse principle, such that its Brauer group $\Br(X)$ contains $B_0$ as a subgroup which is  required to obstruct the Hasse principle, i.e. $B_0$ obstructs the Hasse principle while no proper subgroup of $B_0$ obstructs?
\end{ques2}

In the present paper, we simultaneously answer to Questions 1 and 2. To be more precise, we state our main result as follows.

\begin{thm}[Theorem \ref{thm2}]
Let $B$ be a finite abelian group of exponent dividing $n$ and $0\neq B_0\subset B$ be a non-zero subgroup. Let $k$ be a number field containing a primitive $n$-th root of unity. Then there exists a smooth rationally connected variety $X$ defined over $k$ satisfying the following conditions.
\begin{enumerate}
\item There is a monomorphism $B\to\Br(X)$ inducing an isomorphism of groups $B\simeq\coker(\Br(k)\to\Br(X))$.
\item For any subgroup $B'\subset B$, the Brauer\textendash Manin set $X(\A_k)^{B'}=\varnothing$ if and only if $B_0\subset B'$. In particular, the variety $X$ violates the Hasse principle, and the obstruction to the Hasse principle is precisely given by the subgroup $B_0$.
\end{enumerate}
\end{thm}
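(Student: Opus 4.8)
The plan is to take $X$ to be a smooth proper model of a fibre product of normic bundles over $\mathbb{P}^1_k$, with its degeneration data engineered so as to prescribe simultaneously the algebraic part of its Brauer group and the Brauer--Manin behaviour. I would begin by applying the simultaneous (``stacked'') basis theorem to the inclusion $B_0\subset B$: write $B=\bigoplus_{i=1}^{r}(\Z/a_i)e_i$ with $a_i\mid n$ and $B_0=\bigoplus_{i=1}^{r}\langle (a_i/b_i)e_i\rangle$ for suitable divisors $b_i\mid a_i$; since $B_0\neq 0$, at least one $b_i>1$. Using $\mu_n\subset k$ and Kummer theory, for each $i$ pick a degree-$a_i$ cyclic extension $L_i/k$ with character $\chi_i\in\H^1(k,\Z/a_i)$, together with polynomials $P_i(t)\in k[t]$ and a distinguished factor $Q_i(t)$ of $P_i(t)$; then let $X$ be a smooth proper model over $\mathbb{P}^1_t$ of the fibre product over $\mathbb{P}^1_t$ of the affine bundles $N_{L_i/k}(\Xi_i)=P_i(t)$, $i=1,\dots,r$. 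The geometric generic fibre is a torsor under the product of norm-one tori $\prod_i R^1_{L_i/k}\G_m$, hence rational, so $X$ is (geometrically) rationally connected by Graber--Harris--Starr together with Koll\'ar--Miyaoka--Mori.

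For condition (1): on $X$ the function $P_i(t)$ is a norm from $L_i\otimes_k k(X)$, so $(\chi_i,P_i)=0$ in $\Br(k(X))$ and the cyclic class $A_i:=(\chi_i,Q_i(t))$ has order dividing $a_i$; its residues along the degenerate fibres are killed by the standard local structure of a normic bundle, so $A_i\in\Br(X)$. Using the description of $\H^1(k,\Pic\overline X)$ for such bundles over $\mathbb{P}^1$ in terms of the configuration of degenerate fibres (as in the conic-bundle computations of \cite{CT-SD} and their generalisations), together with $\Br(\overline X)=0$ and $\H^3(k,\G_m)=0$, I would choose the $L_i$ linearly disjoint, the splitting fields of the roots of the $P_i$ and the leading coefficients generic, and the degeneration loci disjoint, so that $e_i\mapsto A_i$ induces an isomorphism $B\xrightarrow{\ \sim\ }\coker(\Br(k)\to\Br(X))$; this automatically upgrades to a monomorphism $B\hookrightarrow\Br(X)$, since a relation $\sum_i\lambda_iA_i=0$ in $\Br(X)$ forces $\sum_i\lambda_ie_i=0$ in $B$.

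For condition (2), set $B_0^\perp:=\{\xi\in B^\vee:\xi|_{B_0}=0\}$ and let $\varphi\colon X(\A_k)\to B^\vee=\Hom(B,\Q/\Z)$ be the Brauer--Manin evaluation $\varphi((x_v))(b)=\sum_v\inv_v(b(x_v))$. Then $X(\A_k)^{B'}=\varphi^{-1}\big((B')^\perp\big)$, and since $(B')^\perp\subset B_0^\perp\iff B_0\subset B'$ by biduality of finite abelian groups, condition (2) (and, taking $B'=B$, the failure of the Hasse principle, with $X(\A_k)\neq\varnothing$ since $B_0\neq 0$ forces $B_0^\perp\neq B^\vee$) follows once one shows
\[\operatorname{Im}\varphi=B^\vee\setminus B_0^\perp.\]
I would prove this in two steps. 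First, a \emph{localisation} step: after choosing the $P_i$ integral with unit leading coefficients and the $\chi_i$ unramified outside a finite set $S$ of places, local solubility of norm equations forces $\inv_v(A_i(x_v))=0$ for every adelic point and every $v\notin S$ (and one arranges $X(k_v)\neq\varnothing$ for $v\in S$, so $X(\A_k)\neq\varnothing$). Second, a \emph{realisation} step at the places of $S$: the degeneration data there is chosen so that (i) varying the local points in the sub-bundles ``invisible to $B_0$'' allows one to add to $\varphi$ an arbitrary element of $B_0^\perp$, while (ii) the sub-bundles relevant to $B_0$ are coupled at a single distinguished place in such a way that, via weak approximation on $\mathbb{P}^1$, the composite $X(\A_k)\xrightarrow{\varphi}B^\vee\twoheadrightarrow B_0^\vee$ has image exactly $B_0^\vee\setminus\{0\}$. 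Combining (i) and (ii) gives $\operatorname{Im}\varphi=B^\vee\setminus B_0^\perp$, as $\operatorname{Im}\varphi$ is then a union of $B_0^\perp$-cosets whose images in $B_0^\vee$ are precisely the nonzero ones.

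The main obstacle is step (ii) of the realisation: arranging the normic data so that the pairing against $B_0$ hits every nonzero character of $B_0$ while no adelic point is orthogonal to all of $B_0$. This is the analogue, for an arbitrary finite abelian $B_0$, of the conic-bundle construction of J.\ Berg et al.\ \cite{Berg-etal} for finite elementary abelian $2$-groups, and it is where the bulk of the work lies: a delicate simultaneous choice of the extensions $L_i$, the positions and multiplicities of the degenerate fibres, and the local points at the finitely many places of $S$, together with careful bookkeeping of the relevant local Hilbert symbols.
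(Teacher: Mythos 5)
Your reduction of condition (2) to the single statement $\im\varphi=\widehat{B}\setminus\ker(\widehat{B}\to\widehat{B_0})$ is exactly the reduction the paper makes, and your construction for condition (1) is genuinely different: you take a fibre product of $r$ normic bundles $\N_{L_i/k}(\Xi_i)=P_i(t)$, one cyclic extension $L_i$ per cyclic factor of $B$, whereas the paper uses a \emph{single} degree-$n$ cyclic extension $K=k(\sqrt[n]{a})$ and one equation $\N_{K/k}(\vec z)=\prod_i P_i(x)$, extracting the factor $\Z/r_i\Z$ from the class $(a,P_i(x))_{\zeta_n}$ whose order is controlled by how far $x^n-a$ splits over $k[x]/(P_i)$ (taking $P_i=Q_i^{n/r_i}-a$). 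The single-extension route buys two things you would have to work for: a smooth proper model with controlled degenerate fibres exists unconditionally by V\'arilly-Alvarado--Viray (for a fibre product you must build one yourself and re-verify that the degeneration loci can be kept disjoint and that no extra vertical classes or relations appear), and the kernel of $\Br(k(\P^1))\to\Br(X_\eta)$ is cyclic, which is what makes the cokernel computation clean; with $r$ factors that kernel is generated by $r$ classes and your claim that the cokernel is exactly $\bigoplus_i\Z/a_i$ needs an actual argument, not genericity of the data. These are repairable, but they are not details.

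The genuine gap is in step (ii) of your realisation argument, which you yourself flag as ``the main obstacle'': you assert that the sub-bundles relevant to $B_0$ can be ``coupled at a single distinguished place'' so that the composite $X(\A_k)\to\widehat{B}\to\widehat{B_0}$ has image exactly $\widehat{B_0}\setminus\{0\}$, but you give no mechanism for excising the single value $0$ from the image while keeping every nonzero character --- and this exclusion \emph{is} the theorem, since hitting $0$ would mean the Brauer--Manin set is nonempty. The paper's mechanism (following Berg et al.) has two ingredients you are missing. First, at finitely many auxiliary places $v_j$ the local evaluation map is made \emph{surjective} onto all of $\widehat{B}$ by counting points, via the Hasse--Weil bound, on the curves $\varepsilon_iy_i^{r_i}=P_i(x)$ over the residue field; this needs the classes to be simultaneously and independently evaluable at one place, which is arranged by taking $v_j(a)=1$ so that $\inv_{v_j}(a,P_i(c))_{\zeta_n}$ is read off from the residue class of $P_i(c)$. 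Second, one base-changes the whole bundle along a map $R:\P^1\to\P^1$ chosen by weak approximation so that $R(\P^1(k_{v_1}))$ lands inside $\pi(\phi_{v_1}^{-1}(S_1))$ and meets $\pi(\phi_{v_1}^{-1}(s))$ for each $s\in S_1:=\widehat{B}\setminus\ker(\widehat{B}\to\widehat{B_0})$, while $R-\theta$ stays irreducible over the relevant local splitting fields so that the Brauer group is preserved under pullback (Lemma \ref{irreducibility}). It is this pullback that cuts the local image down from all of $\widehat{B}$ to exactly $S_1$; without it, ``varying local points'' and ``careful bookkeeping of Hilbert symbols'' will give you points evaluating to $0$ and the obstruction disappears.
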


The variety concerned in the theorem is a smooth compactification of the subvariety in $\mathbb{A}^{n+1}$ defined over $k$ by a normic equation
\begin{equation}\tag{$\star$}\label{norm equation}
\N_{K/k}(\mathbf{z})=P(x),
\end{equation}
where $K$ is a degree $n$ cyclic Galois extension of $k$ and $P\in k[x]$ is a separable polynomial of degree divisible by $n$. In Sections \ref{section abelian} and \ref{section cyclic}, we compute the unramified Brauer group (Theorem \ref{cyclic case}) of the equation \eqref{norm equation}  by applying the existence of a proper smooth model constructed by A. V\'arilly-Alvarado and B. Viray in \cite{VAV-model}. It turns out that such unramified Brauer groups, being determined by the factorization of $P$ over $K$, can already recover all finite abelian groups. The conclusion (1) in our theorem is therefore obtained.
When $n=2$, similar computations appeared in the book \cite{Skbook}*{Proposition 7.1.2} by A. Skorobogatov.
When $n=p$ is a prime number, these unramified Brauer groups have been computed by A. V\'arilly-Alvarado and B. Viray
in \cite{VAV-chatletp-fold}*{Theorem 3.2}.
In this case, we recover and generalise their result by removing the technical constraints on the splitting fields of irreducible factors of $P$.
In a preprint of D. Wei \cite{Wei2022+}, he constructed an ``almost proper" model to compute the unramified Brauer group for even more general normic equations. For more information on this topic, we refer to \cite{Wei2022+} and the book written by J.-L. Colliot-Th\'el\`ene and A. Skorobogatov \cite{CTSkbook}*{Remark 11.1.12}.

Replacing the conic bundle in \cite{Berg-etal} by the normic equation above, with a similar outline of proofs we are able to generalise its main result to the  conclusion (2) in our theorem. More details are given in Section \ref{section maintheorem2}.
We should also remark that the conclusion (2) is even an answer to a stronger version of Question 2. Indeed, it states that $B_0$ \textit{precisely obstructs} the Hasse principle. In other words, the subgroup $B_0$ is not only \textit{a minimal subgroup} but also \textit{the smallest subgroup} of $\Br(X)$ such that $X(\A_k)^{B_0}=\varnothing$. It follows a fortiori that $B_0$ \textit{completely captures} (cf.  \cite{Viray23}*{Definition 31(2)}) the Brauer\textendash Manin obstruction in the sense of B. Creutz and B. Viray.

\section{Notation and preliminaries}

\subsection{Notation and conventions}\label{notation}

In the present paper, we denote by $k$ a field of characteristic $0$.  We fix an algebraic closure $\overline{k}$ of $k$. The absolute Galois group of $k$ is denoted by $\Gamma_k=\Gal(\overline{k}/k)$. All finite extension of $k$ will be considered as a subfield of $\overline{k}$.

When $k$ is a number field, we denote by $\Omega_k $ the set of all places of $k$.
For any $v\in \Omega_k$, let $k_v$ be the corresponding completion. When $v$ is non-archimedean, we denote by $\F_v$ its residue field.

A variety over $k$ is understood as a separated scheme of finite type over $k$.
For any variety $X$ defined over $k$, we denote by  $\br(X)=\HH^2_\text{\'et}(X,\G_m)$ its cohomological  Brauer group whose  subgroup  consisting of constant classes is denoted by $\br_0(X)=\im(\br(k)\rightarrow\br(X))$.
We write $\overline{\br}(X)$ (respectively $\overline{\br}_\textup{vert}(X)$) for the quotient $\br(X)/\br_0(X)$ (respectively $\br(X)_\textup{vert}/\br_0(X)$) for simplicity.

\subsection{Preliminaries}

Before the beginning of the proof of our main theorem, we present several preliminaries in this section. Although some of them are probably well-known, we still give some details for the self-containedness of the paper.

\subsubsection{Galois theory and norm}
\begin{lem}\label{galoistheory}
    Let  $K$ be a finite Galois extension of $k$. Then, for any finite extension $l$ of $k$, the compositum $Kl$ is a Galois extension of $l$ with Galois group $\Gal(Kl/l)\simeq\Gal(K/K\cap l)$ a subgroup of $\Gal(K/k)$.
\end{lem}

\begin{proof}
    This is classic, we sketch its proof as follows.
    If $K$ is the splitting field of a certain separable polynomial $f\in k[x]$ over $k$, then $Kl$ is the splitting field of $f$ over $l$, which is therefore an Galois extension. The monomorphism $\sigma\mapsto\sigma_{|K}$ identifies $\Gal(Kl/l)$ as a subgroup of $\Gal(K/k)$. The fixed subfield of $\Gal(Kl/l)$ is exactly $K\cap l$. We refer to \cite[Theorem 5.5]{GTM167} for more details.
\end{proof}

\begin{prop}\label{polynorm}
    Let $l\subset\overline{k}$ be a rupture field of a monic irreducible polynomial $P\in k[x]$. Let  $K$ be a finite Galois extension of $k$. 
    If we denote $l'=K\cap l$, then $P$ is a norm for the extension $l'(x)/k(x)$.
\end{prop}

\begin{proof}

    We write $l=k(\alpha)$ with $\alpha\in\overline{k}$ a root of $P$.
    Let $P=\prod_{j=1}^{r}P_j$ be the monic irreducible factorization of $P$ over the extension $K$.  Without loss of generality,  we may assume that $\alpha$ is a root of  $P_1$. Then $P_1$ is the minimal polynomial of $\alpha$ over $K$.

    The  group $\Gal(K/k)$ acts naturally on the set of factors $\{P_1,\ldots,P_r\}\subset K[x]$. As the product of all elements in each orbit is stable under the action of $\Gal(K/k)$, it is a polynomial in $k[x]$. Since such a product divides $P$, it must equal  $P$ by monic irreducibility. In other words, the action is transitive. It follows that all the factors $P_j$ are of the same degree $\frac{\deg(P)}{r}$.

    According to Lemma \ref{galoistheory}, the subgroup $\Gal(K/K')\simeq\Gal(Kl/l)$ of $\Gal(K/k)$ fixes $\alpha\in l$, then it also fixes the minimal polynomial $P_1$ of $\alpha$ over $K$. It follows that $\Gal(K/K')$ is a subgroup of the stabilizer $G_1$ of $P_1$. Indeed they are the same subgroup of $\Gal(K/k)$ since
\begin{align*}
|\Gal(K/K')|=|\Gal(Kl/l)|=[Kl:l]&=\frac{[Kl:K][K:k]}{[l:k]}\\
&=\frac{\mathrm{deg}(P_1)[K:k]}{\mathrm{deg}(P)}=\frac{[K:k]}{r}=\frac{|\Gal(K/k)|}{r}=|G_1|.
\end{align*}
    For each $1\leq j\leq r$, choose $\sigma_j\in\Gal(K/k)$ such that $\sigma_j P_1=P_j$, then $\Gal(K/k)=\bigsqcup_{j=1}^r \sigma_j \Gal(K/K')$.
    So we can write  $\mathrm{Hom}_k(K',\overline{k})=\{{\sigma_1}_{|K'},\ldots,{\sigma_r}_{|K'}\}$ and we have
    \[
        \N_{K'(x)/k(x)}(P_1)=\prod_{j=1}^r \sigma_j P_1=\prod_{j=1}^rP_j=P.
    \]
\end{proof}

Let $K/k$ be a Galois extension of degree $n$, we define a \emph{normic polynomial} $\N_{K/k}(\mathbf{z})\in k[\mathbf{z}]$ in one $K$-variable $\mathbf{z}$ as follows. If we choose a $k$-linear basis $e=(e_1,\ldots,e_n)$ of $K$ and write $\mathbf{z}=\sum_{i=1}^ne_iz_i$, then $\N_{K/k}(\mathbf{z})$ is defined to be the homogeneous polynomial
$$\N_{K/k}(e_1z_1+\cdots+e_nz_n)=\prod_{\sigma\in\Gal(K/k)}\sigma(e_1z_1+\cdots+e_nz_n)\in k[z_1,\ldots,z_n]$$
in $n$ variables with coefficients in $k$ according to Galois theory. A different choice of the basis $e$ results by a $k$-linear change of variables, while it does not affect the factorisation of such a polynomial.  It follows that the isomorphic class of the normic variety over $k$ that we are going to define using the normic polynomial in Section \ref{normicvarietysubsection} is independent of the choice of the basis.

We study the factorisation of $\N_{K/k}$ over finite extensions of $k$.

\begin{lem}\label{norm}
    Let $l$ be a finite extension of $k$. Then the irreducible factors of $\N_{K/k}$ over $l$ are
    \[
        \N_{l,\tau}=\prod_{\sigma\in\Gal(Kl/l)}\sigma\tau(\mathbf{z})=\prod_{\sigma\in\Gal(Kl/l)}\sigma\tau(e_1z_1+\cdots +e_nz_n)
    \]
    with $\tau\in\Gal(K/k)$ such that $\Gal(Kl/l)\tau$ ranges over the set  $\Gal(Kl/l)\backslash\Gal(K/k)$ of right cosets.

    In particular, the polynomial $\N_{K/k}$ is irreducible over $k$.
\end{lem}

\begin{proof}
    Consider the free action of the subgroup $\Gal(Kl/l)\subset\Gal(K/k)$ on the set of linear factors of $\N_{K/k}$ over $K$
    \[
        \{\tau(\mathbf{z})=\tau(e_1z_1+\cdots +e_nz_n)\mid\tau\in\Gal(K/k)\}.
    \]
    It is a disjoint union of orbits. By Galois theory, the product $$\N_{l,\tau}=\prod_{\sigma\in\Gal(Kl/l)}\sigma\tau(e_1z_1+\cdots +e_nz_n)$$ of elements in a single orbit is a polynomial in $l[z_1,\dots,z_n]$.

    Let $\N\in l[x]$ be an irreducible factor of $\N_{K/k}$. By definition of the normic polynomial, there exists a certain $\tau\in\Gal(K/k)$ such that $\tau(e_1z_1+\cdots+e_nz_n)$ divides $\N$ in $Kl[z_1,\dots,z_n]$. Since $\sigma\tau(e_1z_1+\cdots+e_nz_n)$ divides $\sigma\N=\N$ for any $\sigma\in\Gal(Kl/l)$, we find that $\N_{l,\tau}$ divides $\N$ in $l[z_1,\dots,z_n]$. Hence $\N=\N_{l,\tau}$ by irreducibility.
\end{proof}

\begin{rem}\label{f-factorization}
Let $f\in k[x]$ be a monic irreducible polynomial such that $K=k[x]/(f)$. We can see that the factorisations over $l$ of $f$ and $\N_{K/k}$ determine each other as follows. As $K/k$ is Galois, the roots $\alpha_1,\ldots,\alpha_n$ of $f$ lie in $K$. Fixing a root $\alpha=\alpha_i$ and replacing the set $\{\tau(e_1z_1+\cdots+e_nz_n)\mid\tau\in\Gal(K/k)\}$ by $\{\tau(x-\alpha)\mid\tau\in\Gal(K/k)\}$, we obtain the factorisation of $f$ in $l[x]$ exactly in the same way. In particular, the irreducible factors of $f$ over $l$ are all of the same degree $[Kl:l]$.
\end{rem}

\subsubsection{Properties of abelian groups}

The following properties on abelian groups are used in the proof of our main result.

\begin{lem}\label{cyclicexponent}
    Let $A=\bigoplus_{i=1}^m\Z/n_i\Z$ be a finite abelian group. Then the order of the element $\mathbf{1}_m=(\overline{1} , \overline{1}, \ldots, \overline{1})$ is equal to the exponent of $A$.
\end{lem}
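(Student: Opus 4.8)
The plan is to compute both quantities directly and observe that they coincide. Write $m$ for the least common multiple of $r_1,\dots,r_n$; I claim that both the order of $a_0$ and the exponent of $A$ equal $m$.

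First I would compute the order of $a_0$. For an element $(a_1,\dots,a_n)$ of a direct sum $\bigoplus_{i=1}^n\Z/r_i\Z$ one has $\ell\cdot(a_1,\dots,a_n)=0$ if and only if $\ell\cdot a_i=0$ in $\Z/r_i\Z$ for every $i$; hence the order of the tuple is the least common multiple of the orders of the components $a_i$. Applying this to $a_0$ and using that $\overline{1}$ generates $\Z/r_i\Z$, so that its order is exactly $r_i$, gives that the order of $a_0$ is $m$.

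Next I would compute the exponent of $A$. By definition the exponent of a finite abelian group is the least common multiple of the orders of its elements; for a direct sum this equals the least common multiple of the exponents of the summands. Since the exponent of the cyclic group $\Z/r_i\Z$ is $r_i$, the exponent of $A$ is again $m$. Comparing the two computations completes the proof.

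There is essentially no obstacle here: the argument is elementary, and the only point that requires a word of justification is the standard fact that the order of a tuple in a finite direct product is the least common multiple of the orders of its coordinates, which is immediate from the componentwise description of when a multiple of the tuple vanishes.
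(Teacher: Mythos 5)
Your proof is correct. It is, however, a different (and more direct) route than the paper's: the paper invokes the structure theorem to pass to the invariant factor decomposition $A=\bigoplus_{i=1}^m\Z/r'_i\Z$ with $r'_1\mid\cdots\mid r'_m$, asserts that $a_0$ corresponds to the all-ones tuple there, and reads off the order as $r'_m$, the exponent. You instead compute both sides against the original decomposition: the order of a tuple is the lcm of the orders of its coordinates, so $\mathrm{ord}(a_0)=\mathrm{lcm}(r_1,\dots,r_n)$, and the exponent of the direct sum is the lcm of the exponents of the summands, which is the same number. Your version buys two things: it avoids the structure theorem entirely, and it sidesteps the one genuinely underjustified step in the paper's argument, namely that the re-decomposition isomorphism can be chosen to carry the all-ones tuple to the all-ones tuple (true, but not automatic and not argued there). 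Since the lemma only concerns the order of $a_0$, your computation is exactly what is needed and nothing more.
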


\begin{proof}
    According to the structure theorem of finitely generated abelian groups, the group $A$ is isomorphic to $\bigoplus_{i=1}^{m'}\Z/n'_i\Z$ with a family of integers $n'_i\mid n'_{i+1}$ for all $i$. Moreover, under this isomorphism $\mathbf{1}_m$ corresponds to $\mathbf{1}_{m'}=(\overline{1} , \overline{1}, \ldots, \overline{1})\in\bigoplus_{i=1}^{m'}\Z/n'_i\Z$ whose order is obviously equal to the exponent of the group.
\end{proof}

\begin{lem}\label{group structure}
    For  $1\leq i\leq m$, let $\varphi_i:A\rightarrow A_i$ be epimorphisms of finite abelian groups.
    Then there exists an element  $a\in A$ such that the order of $(\varphi_1(a),\ldots,\varphi_m(a))$ equals  the exponent of  $\bigoplus_{i=1}^m A_i$.
\end{lem}

\begin{proof}
    Factorise the exponent $\exp(\bigoplus_{i=1}^m A_i)=\prod p^{r}$ as a product of distinct primes with positive integers $r$. If, for each $p$, we can find $a_p\in A$ such that $(\varphi_1(a_p),\ldots,\varphi_n(a_p))$ is an element of order $p^{r}$, then we conclude by taking $a=\sum a_p$.

    The structure theorem of finitely generated abelian groups applied to these $A_i$'s implies that there exists at least one $i$ such that the $p$-adic valuation $\mathrm{ord}_{p}(\exp(A_i))=r$ and that there exists an element $a_i\in A_i$ of order $p^r$. Take $a_p$ to be a lifting of $a_i$ to the $p$-primary part of $A$ via $\varphi_i$.
\end{proof}

\begin{lem}\label{cyclicgroup}
    Let $\varphi:A\rightarrow A_3$ be a homomorphism of abelian groups whose kernel is a finite cyclic group.
    Let $A_1$ and $A_2$ be subgroups of $A$ such that $A_1\cap A_2=\{0\}$. Then $\ker(A_1\rightarrow A_3/\varphi(A_2))$ is a finite cyclic group.
\end{lem}

\begin{proof}
    First of all,  we have $\ker(A_1\rightarrow A_3/\varphi(A_2))=A_1\cap (A_2+\ker(\varphi))$. We claim that it is a subquotient of $\ker(\varphi)$, therefore it is also a finite cyclic group. To see this, we consider the subgroup $A_0=\ker(\varphi)\cap(A_1+A_2)$ of $\ker(\varphi)$. As $A_1\cap A_2=\{0\}$, any element $a_0\in A_0$ can be written uniquely as $a_0=a_1+a_2$ with $a_1\in A_1$ and $a_2\in A_2$. Then
    \begin{gather*}
        A_0\rightarrow A_1\cap (A_2+\ker(\varphi))\\
        a_0\mapsto a_1
    \end{gather*}
    defines an epimorphism of abelian groups.
\end{proof}

\subsubsection{An irreducibility criterion}
We recall the following irreducibility criterion that we are going to use frequently. The original version was stated only for a completion $L=k_v$ of $k$, but it is still valid for an arbitrary extension $L$ of $k$ without any modification of the proof.

\begin{lem}[\cite{Berg-etal}*{Lemma 4.4}]\label{irreducibility}
    Let $k$ be a field and $g\in k[x]$ be an irreducible polynomial. Let $L/k$ be a field extension containing a root $\theta$ of $g$.
    If $h\in k[x]$ is a polynomial such that $h-\theta$ is irreducible over $L$, then $g\circ h$ is irreducible over $k$.
\end{lem}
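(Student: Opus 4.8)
The plan is to prove that $h\circ g$ is irreducible by computing the degree over $k$ of one of its roots. Set $d=\deg h$ and $e=\deg g$; since $g-\theta$ is irreducible over $L$ it is in particular nonconstant, so $e\geq 1$ and $\deg(h\circ g)=de$. First I would make a harmless reduction: because $\theta\in L$ we have $k(\theta)\subseteq L$, and a polynomial that is irreducible over the larger field $L$ is a fortiori irreducible over the subfield $k(\theta)$; thus we may assume from the outset that $g-\theta$ is irreducible over $k(\theta)$.

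Next, fix a root $\alpha$ of $h\circ g$ in $\overline{k}$ (it exists since $h\circ g$ is nonconstant). Then $g(\alpha)$ is a root of $h$, so, $h$ being irreducible over $k$, the element $g(\alpha)$ has the same minimal polynomial over $k$ as $\theta$ (namely the monic scalar multiple of $h$). In particular $[k(g(\alpha)):k]=d$, and there is a $k$-algebra isomorphism $\iota\colon k(\theta)\xrightarrow{\ \sim\ }k(g(\alpha))$ with $\iota(\theta)=g(\alpha)$.

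The crucial step is to transport the irreducibility of $g-\theta$ through $\iota$. Extending $\iota$ to an isomorphism of polynomial rings $k(\theta)[x]\xrightarrow{\ \sim\ }k(g(\alpha))[x]$ fixing $x$, and using that the coefficients of $g$ lie in $k$ and are therefore fixed by $\iota$, one sees that $g(x)-\theta$ is carried to $g(x)-g(\alpha)$. Hence $g(x)-g(\alpha)$ is irreducible over $k(g(\alpha))$, and since $\alpha$ is one of its roots we obtain $[k(\alpha):k(g(\alpha))]=e$.

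Multiplying degrees along the tower $k\subseteq k(g(\alpha))\subseteq k(\alpha)$ yields $[k(\alpha):k]=de=\deg(h\circ g)$. As $\alpha$ is a root of $h\circ g$, the minimal polynomial of $\alpha$ over $k$ divides $h\circ g$ and has degree $de$, so $h\circ g$ coincides with it up to a factor in $k^\times$ and is therefore irreducible over $k$. The only point requiring genuine care is the transport step — observing that $\iota$ fixes $g$ and sends $\theta$ to $g(\alpha)$ — and this is elementary, so I anticipate no real obstacle; everything else is a routine degree computation.
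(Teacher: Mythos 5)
Your proof is correct. Note that the paper itself gives no argument for this lemma---it is quoted verbatim from Berg et al., Lemma 4.4---so there is nothing internal to compare against; your degree-counting proof (pass from $L$ down to $k(\theta)$, transport $g-\theta$ to $g-g(\alpha)$ along the $k$-isomorphism $k(\theta)\simeq k(g(\alpha))$, then multiply degrees in the tower $k\subseteq k(g(\alpha))\subseteq k(\alpha)$) is the standard one and is essentially the argument given in the cited source. The one step that genuinely needs the observation you make---that irreducibility over the possibly larger field $L$ descends to the subfield $k(\theta)$ because $g-\theta$ already has coefficients there---is handled correctly.
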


\subsubsection{Local invariants of Brauer classes}\label{local-inv}
According to the theory of local fields, when an element of the Brauer group is given by a cyclic algebra, its local invariant is easy to compute.

Let $n>0$ be an integer and $k$ be a number field containing a primitive $n$-th root of unity $\zeta=\zeta_n$.
Let $k_v$ be the local field associated to a non-archimedean place $v\in \Omega_k$. Denote by $\F_v$ its residue field. If the characteristic $\textup{char}(\F_v)$ does not divide $n$, then the image  $\bar{\zeta}$ of $\zeta$ in $\F_v$ is still a primitive $n$-root of unity. In particular, $n$ divides $|\F_v|-1$. Let
\begin{equation*}
\log:\mu_n(\F_v)\to \frac{1}{n}\Z/\Z\subset\Q/\Z
\end{equation*}
be the isomorphism mapping $\bar{\zeta}$ to $\frac{1}{n}~\textup{mod}~\Z$.

For $a\in k^*$ such that $x^n-a$ is irreducible over $k$, we fix a root $\sqrt[n]{a}\in\overline{k}$. Then $K=k(\sqrt[n]{a})$ is a cyclic extension of $k$ of Galois group $G=\Gal(K/k)\simeq\Z/n\Z$, which is generated by an element $\sigma:\sqrt[n]{a}\mapsto\zeta\sqrt[n]{a}$. Let $\chi_a:G\to\Q/\Z$ be the character mapping $\sigma$ to $\frac{1}{n}~\textup{mod}~\Z$, then the group $\widehat{G}$ of characters is generated  by $\chi_a$.

For an element $b\in k_v^*$, let $(a,b)_\zeta\in\br(k_v)$ be the class of the cyclic $k_v$-algebra  generated by symbols $x,y$ subject to the relations $x^n=a,y^n=b,xy=\zeta yx$. According to \cite[Proposition 2.5.2, Corollary 2.5.5, Proposition 4.7.3]{GilleSzamuely}, this identifies with the class $\left(\chi_a,b\right)\in\br(k_v)$ defined in terms of cup products or other equivalent form which will appear in Section \ref{section cyclic}.

\begin{prop}[\cite{serre13}*{Chapter XIV, Proposition 8 and Corollary}]\label{inv}
Let $k$ be a number field containing a primitive $n$-th root of unity and $v$ a place of $k$ such that its residue characteristic does not divide $n$.

Define $\overline{c}\in\F_v^*$ to be the reduction of the $v$-adic unit $\displaystyle c=(-1)^{\alpha\beta}\frac{a^\beta}{b^\alpha}$ where $\alpha=\mathrm{ord}_v(a)$ and $\beta=\mathrm{ord}_v(b)$.
Then
$$\inv_v(\left(\chi_a,b\right))=\log(\overline{c}^{\frac{|\F_v|-1}{n}}).$$
\end{prop}

\subsubsection{Endomorphisms of  $\P^1$} 
As an important argument of the proof of their Theorem 7.1 \cite[Section 7.2]{Berg-etal}, the authors apply weak approximation to obtain the existence of a global polynomial (of sufficiently divisible degree) satisfying certain local conditions. We summarise it as the following black box.

\begin{prop}\label{p1p1}
    Let $k$ be a number field and $v_0,\ldots,v_m,w_1,\ldots,w_s,w'_1,\dots,w'_{s'}\in \Omega_k$ be distinct non-archimedean places of $k$.
    For each $0\leq i\leq m$, let $\theta_i$ be an element of $k_{v_i}$. 
    For each $1\leq t\leq s$, let $(\UU_{w_t,\lambda})_{\lambda\in\Lambda_t}$ be a finite family of non-empty open subsets of $\P^1(k_{w_t})$ whose union $\UU_{w_t}=\bigcup_{\lambda\in\Lambda_t}\UU_{w_t,\lambda}$ contains $\infty$. For each $1\leq t'\leq s'$, let $\UU_{w'_{t'}}$ be a non-empty open subset of $\PP^1(k_{w'_{t'}})$.
    
    Then there exists a polynomial $h\in k[x]$ satisfying the following conditions:
    \begin{enumerate}
        \item[\textup{(h1)}] for each $0\leq i\leq m$, the polynomial $h-\theta_i\in k_{v_i}[x]$ is irreducible;
        \item[\textup{(h2)}] for each $1\leq t\leq s$, the image $h(\P^1(k_{w_t}))\subset \UU_{w_t}$ and it meets $\UU_{w_t,\lambda}$ for each $\lambda\in\Lambda_t$,
        \item[\textup{(h3)}] for each $1\leq t'\leq s'$, the image $h(\P^1(k_{w'_{t'}}))$ meets $\UU_{w'_{t'}}$.
    \end{enumerate}
\end{prop}

\section{Unramified Brauer groups of  abelian normic equations}\label{section abelian}

In this section, we will compute the unramified Brauer group of a class of abelian normic bundles. Results in this section are conditional on the existence of a certain model of the normic equation \eqref{norm equation}.

\subsection{Normic varieties and normic bundles}\label{normicvarietysubsection}

Let $K/k$ be Galois extension of degree $n$. For $\theta\in k$, consider the affine variety $X^\circ_{K/k,\theta}\subset\mathbb{A}^{n}$ defined over $k$ by the equation
$$\N_{K/k}(\mathbf{z})=\theta$$ in a $K$-variable  $\mathbf{z}$.  Write  $K=k(\alpha)\simeq k[x]/(f)$ with $\alpha\in\overline{k}$ a root of a monic irreducible polynomial $f\in k[x]$. As $K/k$ is Galois, the roots of $f$ are $\sigma(\alpha)$ with $\sigma\in \Gal(K/k)$. Fix the $k$-linear basis $e=(1,\alpha,\ldots,\alpha^{n-1})$ of $K$, the equation is written as
$$\N_{K/k}(z_1+\alpha z_2+\cdots+\alpha^{n-1}z_n)=\prod_{\sigma\in\Gal(K/k)}\sigma(z_1+\alpha z_2+\cdots+\alpha^{n-1}z_n)=\theta.$$

First of all, we consider the non-degenerate case  that $\theta\neq0$.
For $\sigma\in\Gal(K/k)$ and $0\leq i\leq n-1$, the $n\times n$ matrix $(\sigma(\alpha)^i)_{\sigma,i}$ is invertible. It follows that $X^\circ_{K/k,\theta}$ is isomorphic over $\overline{k}$ to the affine variety defined by
$$z'_1z'_2\cdots z'_n=\theta\neq0$$ which is smooth integral and rational. We refer to a smooth projective model $X_{K/k,\theta}$ of $X^\circ_{K/k,\theta}$ a \emph{normic variety}.

When $\theta=0$ and $K\neq k$, the $k$-variety $X^\circ_{K/k,0}$ is not smooth but reduced. Indeed, geometrically, it is a union of $n$ hyperplanes  intersecting transversally at one central point. According to Lemma \ref{norm} and Lemma \ref{galoistheory}, over a finite extension $l/k$, the variety $X^\circ_{K/k,0}\otimes_kl$ decomposes into $[K\cap l:k]$ irreducible components each of multiplicity $1$. Each component $Y_{l,\tau}$ is defined by the equation
$$\N_{l,\tau}=\prod_{\sigma\in\Gal(Kl/l)}\sigma\tau(z_1+\alpha z_2\cdots +\alpha^{n-1}z_n)=0$$
for a certain $\tau\in\Gal(K/k)$.

\begin{prop}\label{algebraicclosureinfunctionfield}
The algebraic closure of $l$ in the function field $l(Y_{l,\tau})$ of $Y_{l,\tau}$ is the compositum $Kl$.
\end{prop}

\begin{proof}
To simplify the notation, we denote $Y_{l,\tau}$ by $V$ and denote $Kl=l(\alpha)$ by $L$. We are going to compute $L\otimes_ll(V)$ in two ways.

On one hand, the $L$-variety $V\otimes_lL$ decomposes as a union $\bigcup_{\sigma\in\Gal(L/l)}W_\sigma$ of hyperplanes $W_\sigma$ defined by
$\sigma\tau(z_1+\alpha z_2\cdots +\alpha^{n-1}z_n)=0$. In terms of generic points, we obtain that $$L\otimes_ll(V)=\prod_{\sigma\in\Gal(L/l)}L(W_\sigma)$$ as a finite \'etale $l(V)$-algebra.

On the other hand, let $g\in l[x]$ be the monic minimal polynomial of $\alpha$ over $l$, whose  degree is equal to $d=|\Gal(L/l)|$. The polynomial $g$ factorises into a product $\prod_sg_s$ of at most $d$ monic irreducible polynomials over the extension $l(V)$ of $l$. Then $$L\otimes_ll(V)=l[x]/(g)\otimes_ll(V)=\prod_sl(V)[x]/(g_s).$$

The comparison of these two expression tells us  that $g$ splits completely over $l(V)$ and the extension $L(W_\sigma)/l(V)$ is trivial, in particular  $L\subset l(V)$.
As $W_\sigma$ is geometrically integral, the field $L$ is algebraically closed in $L(W_\sigma)=l(V)$. Therefore the algebraic closure of $l$ in $l(V)$ is $L$.
\end{proof}

Now we turn to the discussion of normic bundles over $\P^1$.

Let $P\in k[x]$ be a separable polynomial of degree divisible by $n=[K:k]$.
Factorise
\begin{equation}\label{Pfactorisation}
P=c\prod_{i=1}^m P_i
\end{equation}
as a product of distinct monic irreducible polynomials of degree $d_i$ with $c\in k^*$ the leading coefficient of $P$.
By abuse of notation, we also denote by $P_i$ the closed point of $\Af^1$ defined by the polynomial equation $P_i(x)=0$.

Let $X^\circ_{K/k,P}\subset\Af^{n+1}$ be the hypersurface defined over $k$ by the normic equation
\begin{equation}\tag{\ref{norm equation}}
    \N_{K/k}(\mathbf{z})=P(x).
\end{equation}
As $P$ has no multiple roots in $\overline{k}$, this hypersurface is smooth over $k$ by Jacobian criterion.

\begin{defn}
    We say that a smooth proper compactification $X=X_{K/k,P}$ of $X^\circ_{K/k,P}$ is a \emph{good} (respectively \emph{very good}) model of the equation \eqref{norm equation} if the first three (respectively all the four) of the following conditions are satisfied.
    \begin{enumerate}
        \item there is a proper $k$-morphism $\pi:X\to\P^1$ extending  $X^\circ_{K/k,P}\to \Af^1$ given by $(\mathbf{z},x)\mapsto x$;
        \item the fiber of $\pi$ over $\infty$ is non-degenerate, i.e. geometrically integral;
        \item the degenerate fibers lie over $\{P_1,\ldots,P_m\}$, and the fiber $X_{P_i}$ is birational over $k(P_i)$ to $X^\circ_{K/k,0}\otimes_kk(P_i)$
        \item the generic fibre $X_\eta$ of $\pi$ has geometric Picard group $\Pic(X_{\overline{\eta}})=\Z$ with trivial Galois action of $\Gamma_{k(x)}$;
    \end{enumerate}
\end{defn}

\begin{rem}\label{cyclic model}
    When $K$ is a cyclic extension of $k$, A. V\'arilly-Alvarado and B. Viray constructed a very good model of the equation \eqref{norm equation} in \cite{VAV-model}*{Theorem 1.1}. Their model was obtained by gluing partial compactifications of $X^\circ_{K/k,P}$ and $X^\circ_{K/k,\widetilde{P}}$ where $\widetilde{P}(x)=P(\frac{1}{x})\cdot x^{\deg{P}}$ is the reciprocal polynomial of $P$, cf. \cite{VAV-model}*{Lemma 5.1} for more details.
\end{rem}

\subsection{Vertical Brauer groups of good abelian normic bundles}
From now on, the degree $n$ Galois extension $K/k$ is supposed to be \emph{abelian}.
We are ready to compute the vertical unramified Brauer group of an abelian normic equation \eqref{norm equation} assuming the existence of a good model $X=X_{K/k,P}$.

In order to simplify the notation, we fix an embedding $k_i=k[x]/(P_i)\subset\overline{k}$ of the residue field of each closed point $P_i\in\PP^1$ defined by the polynomial equation $P(x)=0$. We write $K_i=Kk_i$ and $k'_i=K\cap k_i$. We denote by $G$ the abelian group $\Gal(K/k)$.  By Lemma \ref{galoistheory}, it has normal subgroups $G_i=\Gal(K_i/k_i)=\Gal(K/k'_i)$.
The homomorphism $\sigma\mapsto\sigma_{|K}$ identifies $\Gal(K(x)/k(x))$ with $G=\Gal(K/k)$ and their quotients $\Gal(k'_i(x)/k(x))$ with $G/G_i=\Gal(k'_i/k)$.

The epimorphisms $\varphi_i:\widehat{G}\rightarrow\widehat{G}_i$ between their Pontryagin duals are nothing but restriction homomorphisms between Galois cohomologies.
$$\xymatrix{
\widehat{G}=\HH^1(G,\Z/n\Z)\ar[r]^{\subset}\ar[d]^{\varphi_i}&\HH^1(k(x),\Z/n\Z)\ar[d]^{\mathrm{res}_{k_i(x)/k(x)}}\\
\widehat{G}_i=\HH^1(G_i,\Z/n\Z)\ar[r]^{\subset}&\HH^1(k_i(x),\Z/n\Z)
}$$

Let $Q$ be a closed point of $\PP^1$ corresponding to a monic irreducible polynomial $Q(x)$ (or $Q(x)=\frac{1}{x}$ if $Q=\infty$). The Witt residue $\partial_Q:\br(k(x))[n]\to\HH^1(k(Q),\Z/n\Z)$ at $Q$ is the one attached to the $Q$-adic valuation $\mathrm{ord}_Q:k(x)^*\to\Z$ for which the polynomial $Q(x)$ is a uniformiser.
We consider the cup product followed by  $\partial_Q$
\begin{align*}
    \HH^1(k(x),\Z/n\Z)\times\HH^1(k(x),\mu_n)&\buildrel{\smile}\over\To\HH^2(k(x),\mu_n)\buildrel{\simeq}\over\To\br(k(x))[n]\buildrel{\partial_Q}\over\To\HH^1(k(Q),\Z/n\Z)\\
    (\chi,(P)_n)&\xmapsto{\mbox{ }\mbox{ }\mbox{ }\mbox{ }\mbox{ }\mbox{ }\mbox{ }\mbox{ }\mbox{ }\mbox{ }\mbox{ }\mbox{ }\mbox{ }\mbox{ }\mbox{ }\mbox{ }\mbox{ }\mbox{ }\mbox{ }\mbox{ }\mbox{ }\mbox{ }\mbox{ }\mbox{ }\mbox{ }\mbox{ }\mbox{ }\mbox{ }\mbox{ }\mbox{ }\mbox{ }\mbox{ }\mbox{ }\mbox{ }\mbox{ }\mbox{ }\mbox{ }\mbox{ }\mbox{ }\mbox{ }\mbox{ }\mbox{ }\mbox{ }\mbox{ }\mbox{ }\mbox{ }\mbox{ }} \partial_Q(\chi\smile(P)_n)
\end{align*}
where $(P)_n$ is the class of a non-zero rational fraction  $P\in k(x)^*$ in $k(x)^*/k(x)^{* n}\simeq\HH^1(k(x),\mu_n)$.

When $\chi$ lies in the subgroup $\HH^1(G,\Z/n\Z)$ of $\HH^1(k(x),\Z/n\Z)$, it can be viewed as an element of $\HH^1(k,\Z/n\Z)$ under the identification $G=\Gal(K(x)/k(x))=\Gal(K/k)$. Then it follows from \cite{CTSkbook}*{Theorem 1.4.14, Equation (1.19)} and the compatibility of cup products with restriction homomorphisms between Galois cohomologies that
\begin{equation}\label{residueformula}
\partial_Q(\chi\smile(P)_n)=\mathrm{ord}_Q(P)\cdot\chi_Q,
\end{equation} where
$\chi_Q$ is the restriction of $\chi\in\HH^1(k,\Z/n\Z)$ in $\HH^1(k(Q),\Z/n\Z)$. We are going to apply this to the factors $P_i$ appearing in the equation \eqref{norm equation}. Only at the points $Q=P_i$ and $Q=\infty$ we may have non-trivial residues.

For each $1\leq i\leq m$,  we claim that the homomorphism
\begin{align*}
\widehat{G}&\to\br(k(x))\\
\chi_i&\mapsto\chi_i\smile (P_i)_n
\end{align*}
factorises through the quotient $\varphi_i:\widehat{G}\to\widehat{G}_i$ and thus defines a homomorphism
\begin{align*}
\iota_i:~\mbox{ }~\widehat{G}_i&\to\br(k(x))\\
\varphi_i(\chi_i)&\mapsto\chi_i\smile (P_i)_n.
\end{align*}
Indeed, suppose that we have elements $\chi_i$ and $\chi'_i$ of $\widehat{G}$ such that $\varphi_i(\chi_i)=\varphi_i(\chi'_i)$, then $\chi_i-\chi'_i\in \ker(\varphi_i)=\HH^1(\Gal(k'_i(x)/k(x)),\Z/n\Z)$. By Proposition \ref{polynorm}, the polynomial $P_i$ is a norm for $k'_i(x)/k(x)$. Then $\chi_i\smile (P_i)_n=\chi'_i\smile (P_i)_n$ since $(\chi_i-\chi'_i)\smile (P_i)_n=0$ according to \cite{CTSkbook}*{Proposition 1.3.8}.

\begin{lem}\label{brkp1}
      The homomorphism of abelian groups
      \begin{align*}
          \iota:~\mbox{ }~\mbox{ }\bigoplus_{i=1}^m \widehat{G}_i &\rightarrow \br(k(x))\\
          (\varphi_i(\chi_i))_{i=1}^m &\mapsto \sum_{i=1}^m \chi_i\smile (P_i)_n
      \end{align*}
      is a monomorphism whose image has zero intersection with the subgroup $\br(\PP^1)\subset\br(k(x))$.
\end{lem}

\begin{proof}
    According to \eqref{residueformula}, for each $1\leq j\leq m$ we have $\partial_{P_j}(\sum_{i=1}^m \chi_i\smile (P_i)_n)={\chi_j}_{P_j}=\varphi_j(\chi_j)$, which implies that $\ker(\iota)=0$.

    Suppose that $\sum_{i=1}^m \chi_i\smile (P_i)_n\in \br(\PP^1)$. It follows from the purity exact sequence
    \[
          0\To \br(\PP^1)\To \br(k(x))\buildrel{\partial_Q}\over\To \bigoplus_{Q}\mathrm{H}^1(k(P),\mathbb{Q}/\mathbb{Z})
      \]
    where $Q$ ranges over closed points of $\PP^1$, that $\varphi_j (\chi_j)=\partial_{P_j}(\sum_{i=1}^m \chi_i\smile (P_i)_n)=0$ for  $1\leq j\leq m$, which completes the proof.
\end{proof}

Recall that the degree of the irreducible factor $P_i$ of $P$ is  $d_i$. We define
\begin{align*}
\psi:\mbox{ }~\mbox{ }~\mbox{ }~\mbox{ }~\mbox{ }~\mbox{ }\widehat{G}^{m}&\to\widehat{G}\\
(\chi_1,\ldots,\chi_m)&\mapsto\sum_{i=1}^m d_i\chi_i
\end{align*}
and
\begin{align}\label{varphidef}
\begin{split}
\varphi=(\varphi_1,\ldots,\varphi_m):\widehat{G}\oplus\cdots\oplus\widehat{G}&\to\widehat{G}_1\oplus\cdots\oplus\widehat{G}_m
\\
(\chi_1,\ldots,\chi_m)&\mapsto(\varphi_1(\chi_1),\ldots,\varphi_m(\chi_m)).
\end{split}
\end{align}

Define $$B=B_{K/k,P}=\varphi(\ker(\psi))\subset\widehat{G}_1\oplus\widehat{G}_2\oplus\cdots\oplus\widehat{G}_m.$$

\begin{prop}\label{verticalbrauer}
      Let $X=X_{K/k,P}$  be a good model of the abelian normic equation \eqref{norm equation}. The morphism $\pi:X\to\PP^1$ induces a homomorphism $\pi_\eta^*:\br(k(x))\rightarrow \br(X_\eta)$. Then
      \begin{enumerate}
      \item as subgroups of $\br(X_\eta)$, we have
      $$\Br(X)\cap\pi_\eta^*\iota(\bigoplus_{i=1}^m\widehat{G}_i)=\pi_\eta^*\iota(B);$$
      \item the composition $$B\buildrel{\pi^*_\eta\iota}\over\To\br_{\textup{vert}}(X)\To \overline{\br}_{\textup{vert}}(X)$$ is an epimorphism of abelian groups.
      \end{enumerate}
\end{prop}

\begin{proof}
    We argue with the commutative diagram of exact sequences \cite{CTSkbook}*{Diagram (11.3)}
          \begin{equation}\label{Br_vert-diagram}
          \xymatrix{
              0 \ar[r] & \mathrm{Br}(X) \ar[r]                                    & \mathrm{Br}(X_\eta) \ar[rr]^(.35){\partial_Y}                      &  & {\bigoplus\limits_{Q}\bigoplus\limits_{Y\subset X_Q}\mathrm{H}^1(k(Q)(Y),\mathbb{Q}/\mathbb{Z})} \\
              0 \ar[r] & \mathrm{Br}(\PP^1) \ar[r] \ar[u]^{\pi^\ast} & \mathrm{Br}(k(x)) \ar[u]\ar[u]^{\pi_\eta^\ast} \ar[rr]^(.35){\partial_Q} &  & {\bigoplus\limits_{Q}\mathrm{H}^1(k(Q),\mathbb{Q}/\mathbb{Z})} \ar[u]
          }
          \end{equation}
    where $Q$ ranges over closed points of $\PP^1$ and $Y\subset X_Q$ ranges over the irreducible components of the fiber $X_Q$.
    The unnamed vertical homomorphism is $m_Y\mathrm{res}_{k(Q)(Y)/k(Q)}$, where $m_Y$ is the multiplicity of $Y$ in $X_Q$. According to the discussion at the beginning of Section \ref{normicvarietysubsection}, we have $m_Y=1$ for any $Y$ lying over any $Q$.

    The theory of Galois cohomology says that $$\ker(\mathrm{res}_{k(Q)(Y)/k(Q)})=\HH^1(\Gal(\overline{k(Q)}\cap k(Q)(Y)/k(Q)),\Q/\Z)$$
    once the algebraic closure $\overline{k(Q)}\cap k(Q)(Y)$  of $k(Q)$ in $k(Q)(Y)$ is an abelian extension of $k(Q)$. This algebraic closure is $k(Q)$ itself except for $Q=P_i$ in which case $k(P_i)=k_i$ and $\overline{k_i}\cap k_i(Y_{k_i,\tau})=Kk_i=K_i$ is an abelian extension of $k_i$ by Proposition \ref{algebraicclosureinfunctionfield} and Lemma \ref{galoistheory}. Hence $$\ker(\mathrm{res}_{k(P_i)(Y)/k(P_i)})=\HH^1(G_i,\Q/\Z)=\widehat{G}_i.$$

By the formula \eqref{residueformula}, we have

\begin{equation}\label{residuecomputation}
\partial_Q(\chi_i\smile (P_i)_n)=\left\{
\begin{array}{r@{\quad,\quad}l}
\varphi_i(\chi_i) & \mbox{if }Q=P_i,\\
-d_i\chi_i &\mbox{if }Q=\infty,\\
0 &\mbox{otherwise}.
\end{array} \right.
\end{equation}
     It follows from the diagram that an element $A\in\br (k(x))$ has image $\pi_\eta^\ast (A)$ lying in $\br(X)$ if and only if $\mathrm{res}_{k(Q)(V)/k(Q)}(\partial_Q(A))=0$ for all closed points $Q$ of $\PP^1$ and all irreducible components $Y$ of $X_Q$.
     This applied to
     $$A_0=\sum_{i=1}^m \chi_i\smile (P_i)_n$$
     shows that $\pi_\eta^\ast (A_0)\in\br(X)$ if and only if $$\psi(\chi_1,\ldots,\chi_m)=\sum_{i=1}^md_i\chi_i=0\in\HH^1(k(\infty),\Q/\Z)=\HH^1(k,\Q/\Z)$$ since $\ker(\mathrm{res}_{k(\infty)(X_\infty)/k(\infty)})$ is trivial and $\varphi_i(\chi_i)\in\widehat{G}_i=\ker(\mathrm{res}_{k(P_i)(V)/k(P_i)})$. This proves (1).

To prove (2), we recall by definition that
\begin{equation}\label{brvertdef}
\Br_\textup{vert}(X)=\br(X)\cap\pi^*_\eta(\br(k(x)))\subset\br(X_\eta).
\end{equation}
Then $\pi_\eta^*\iota(B)\subset\Br_\textup{vert}(X)$ by (1), in other words, the composition is well-defined. For the surjectivity of the composition, it remains to show that $\Br_\textup{vert}(X)$ is generated by $\pi_\eta^*\iota(B)$ and $\br_0(X)$.

Let $A\in\br(k(x))$ be such that $\pi^*_\eta(A)\in\br(X)$. Then $\partial_{P_i}(A)\in\widehat{G}_i$ for all $1\leq i\leq m$ and $\partial_{Q}(A)=0$ for all other closed points $Q$ of $\PP^1$. As $\varphi_i:\widehat{G}\to\widehat{G}_i$ is surjective, take $\chi_i\in\widehat{G}$ such that $\partial_{P_i}(A)=\varphi_i(\chi_i)=\partial_{P_i}(\chi_i\smile(P_i)_n)$. By setting
$$A_0=\sum_{i=1}^m \chi_i\smile (P_i)_n\in\iota(\bigoplus\widehat{G}_i)$$
as above, we obtain $\partial_Q(A-A_0)=0$ for all closed points of $Q$ of $\Af^1=\PP^1\setminus\{\infty\}$. Therefore $A-A_0\in\br(\Af^1)=\im(\br(k))$ by purity. As a consequence, we have
$$\pi^*_\eta(A)-\pi^*_\eta(A_0)\in\br_0(X)\subset \br(X)$$ and thus $\pi^*_\eta(A_0)\in\br(X)$.
Whence $A_0\in\iota(B)$ by (1) and we conclude that  $$\Br_\textup{vert}(X)=\pi_\eta^*\iota(B)+\br_0(X).$$
\end{proof}

\subsection{Brauer groups of very good abelian normic bundles}
When $X=X_{K/k,P}$ is furthermore a \emph{very good} model of the abelian normic equation \eqref{norm equation}, we can also compute the kernel of the composition in Proposition \ref{verticalbrauer}(2) and show that $\br_{\textup{vert}}(X)=\br(X)$. As a consequence, we obtain an explicit expression of $\overline{\br}(X)$.

\begin{thm}\label{maintheorem}
Let $X=X_{K/k,P}$  be a very good model of the abelian normic equation \eqref{norm equation}. Then
\begin{enumerate}
\item $\Br_\textup{vert}(X)=\Br(X)$;
\item the kernel of the composition
      \[
          B\buildrel{\pi^*_\eta\iota}\over\To\br_\textup{vert}(X)\To  \overline{\br}_\textup{vert}(X)
      \]
      is a cyclic subgroup $C$ generated by $c_\chi=(\varphi_i(\chi))_{i=1}^m$ for a certain  $\chi\in \widehat{G}$ such that $c_\chi$ has order the exponent $\exp(\bigoplus_{i=1}^m\widehat{G}_i)$, furthermore any $c_\chi$ of the same order is a generator;
\item the composition
$$B/((\varphi_i(\chi))_{i=1}^m)\To\br(X)\To\overline{\br}(X)$$
is an isomorphism of finite abelian groups;
\item the  homomorphism $B/((\varphi_i(\chi))_{i=1}^m)\To\br(X)$ is injective if  $c$ is further assumed to be a norm for $K/k$.
\end{enumerate}
\end{thm}

\begin{proof}
We consider the long exact sequence
      \begin{multline*}
          0 \longrightarrow \mathrm{Pic}(X_\eta) \longrightarrow \mathrm{Pic}(X_{\overline{\eta}})^{\Gamma_{k(x)}} \longrightarrow \mathrm{Br}(k(x)) \nonumber \\
           \longrightarrow \ker(\mathrm{Br}(X_\eta)\rightarrow \mathrm{Br}(X_{\overline{\eta}})) \longrightarrow \mathrm{H}^1(\Gamma_{k(x)},\mathrm{Pic}(X_{\overline{\eta}}))
      \end{multline*}
deduced from the Hochschild--Serre spectral sequence.

(1). As $X$ is a very good model of the abelian normic equation \eqref{norm equation}, the geometric Picard group $\mathrm{Pic}(X_{\overline{\eta}})$ of $X_\eta$ is isomorphic to $\Z$ as a $\Gamma_{k(x)}$-module.
According to the discussion at beginning of Section \ref{normicvarietysubsection}, the $k(x)$-variety $X_\eta$ is rational over $\bar{k}(\P^1)$, whence $\mathrm{Br}(X_{\overline{\eta}})=0$.
It follows from the sequence that $\pi_\eta^*:\br(k(x))\to\br(X_\eta)$ is an epimorphism. Therefore $\Br_\textup{vert}(X)=\Br(X)$ by \eqref{brvertdef}.

(2). It also follows from the sequence that the kernel of $\pi_\eta^*$ is a finite cyclic group. According to Lemma \ref{brkp1}, we know that $\iota(B)\cap \br(\PP^1)=\{0\}$. We are ready to apply Lemma \ref{cyclicgroup} to the homomorphism $\pi_\eta^*:\br(k(x))\to\br(X_\eta)$ as $\varphi:A\to A_3$ with subgroups $A_1=\iota(B)$ and $A_2=\br(\P^1)=\br(k)$ of $A$. We conclude that
$$\ker\left(\iota(B)\To\br(X_\eta)/\pi_\eta^*(\Br(k))\right)$$
is a cyclic group. This is nothing but
$$C=\ker\left(B\to\br_\textup{vert}(X)\to\overline{\br}_\textup{vert}(X)\right)$$
since $\br_\textup{vert}(X)\subset\br(X_\eta)$ and $\iota$ is injective by Lemma \ref{brkp1}.

Lemma \ref{group structure} applied to $A=\widehat{G}$ and $A_i=\widehat{G}_i$ guarantees the existence of a character $\chi\in\widehat{G}$ such that $c_\chi=(\varphi_i(\chi))_{i=1}^m$ has order $\exp(\bigoplus_{i=1}^m\widehat{G}_i)$.
It remains to show that $c_\chi$ lies in $C$ for any $\chi\in\widehat{G}$, then it automatically generates the cyclic subgroup $C\subset B\subset\bigoplus_{i=1}^m\widehat{G}_i$ because of its order. First of all, it is clear that $c_\chi\in B=\varphi(\ker(\psi))$ because $\psi(\chi,\ldots,\chi)=(\sum_{i=1}^md_i)\chi=\deg(P)\chi$ vanishes in $\widehat{G}$ as $n\mid\deg(P)$.
Moreover, as an element of $\br(k(X))$,  which contains $\br(X_\eta)\supset\im(\pi_\eta^*)$ as a subgroup, we have 
\begin{align*}
\pi_\eta^*\iota(c_\chi)&=\sum_{i=1}^m\chi\smile(P_i)_n\\
&=\chi\smile(\prod_{i=1}^m P_i)_n\\
&=\chi\smile(c^{-1}\N_{K/k}(\mathbf{z}))_n\\
&=\chi\smile(c^{-1})_n+\chi\smile(\N_{K/k}(\mathbf{z}))_n\\
&=\chi\smile(c^{-1})_n\in\im(\br(k))
\end{align*}
where the third equality follows from equations \eqref{norm equation} and \eqref{Pfactorisation} and where the last equality follows from \cite[Proposition 1.3.8]{CTSkbook}. Therefore the image of $c_\chi$ in $\overline{\br}_\textup{vert}(X)$ vanishes, in other words $c_\chi\in C$.

(3). The statement follows automatically from (1) and (2).

(4). Once $c$ is further assumed to be a norm for $K/k$, the image $\pi_\eta^*\iota(c_\chi)=\chi\smile(c^{-1})_n$  vanishes. Then the natural inclusion $\ker(\pi_\eta^*\iota)\subseteq C$ must be an equality and the injectivity follows.
\end{proof}

\begin{rem}
We see from the previous proof  that the kernel $C$ equals $\im(\varphi\circ\Delta)$ where
$\Delta:\widehat{G}\to\widehat{G}^m$ is the diagonal embedding. In other words, the cyclicity of the diagonal image of $\widehat{G}$ in $\bigoplus_{i=1}^m\widehat{G}_i$ is a necessary condition of the existence of a very good model $X_{K/k,P}$ of the equation \eqref{norm equation}.
\end{rem}

\section{Varieties with prescribed finite unramified Brauer groups}\label{section cyclic}

In this section, we suppose that  $K/k$ is a cyclic extension with $G=\Gal(K/k)\simeq\Z/n\Z$.
As mentioned in Remark \ref{cyclic model},  the existence of a very good model of the normic equation \eqref{norm equation} is guaranteed by \cite{VAV-model}. We apply results in the previous section  to obtain  unconditional statements.

\subsection{Unramified Brauer groups of cyclic normic equations}\label{Brnr-of-cyclicnormic}
We preserve the notation in the previous section. Now the subgroups $G_i$ of $G$ are all finite cyclic groups. In addition, we fix a generator $\sigma$ of $G$. Let $\chi_K:G\rightarrow \Z/n\Z$ be the character mapping $\sigma$ to $1\in\Z/n\Z$. Then $\chi_K$ generates $\widehat{G}$ and its quotient $\widehat{G}_i$. Any character $\chi_i\in\widehat{G}$ can be written as $\chi_i=\chi_K^{t_i}$ with $t_i\in\Z$. The exponent $t_i~\textup{mod}~ |G_i|$ corresponds uniquely to the element $\varphi_i(\chi_i)\in\widehat{G}_i$.
For a rational fraction $P\in k(x)$, we denote by $\left(\chi_K,P\right)\in\br(k(x))$ the class of the cyclic $k(x)$-algebra generated by $K(x)$ and a symbol $y$ subject to the relations $y^n=P$ and $\lambda y=y\sigma(\lambda)$ for any $\lambda\in K(x)$.
Then $\chi_K\smile (P)_n=\left(\chi_K,P\right)$ by \cite{CTSkbook}*{Formula (1.7) and Theorem 1.4.14}. In our case, this  $P\in k(x)$ will usually be the polynomial $P$ appearing in the equation \eqref{norm equation} or its monic irreducible factors $P_i$. In particular $\iota(\varphi_i(\chi_i))=\chi_i\smile (P_i)_n=\left(\chi_K^{t_i},P_i\right)=\left(\chi_K,P_i^{t_i}\right)$.

Let $f\in k[x]$ be a monic irreducible polynomial such that $K=k[x]/(f)$. As explained in Remark \ref{f-factorization}, it factorises over $k_i=k[x]/(P_i)$ as a product of irreducible polynomials of the same degree $n_i=[Kk_i:k_i]=|G_i|$. Then $n=[K:k]$ divides $[Kk_i:k]=[Kk_i:k_i][k_i:k]=n_i\deg(P_i)=n_id_i$.

We fix an isomorphism $\widehat{G}\simeq\Z/n\Z$ sending $\chi_K$ to $\overline{1}\in\Z/n\Z$, then its image $(\overline{1},\overline{1},\ldots,\overline{1})\in\bigoplus_{i=1}^m\widehat{G}_i\simeq\bigoplus_{i=1}^m\Z/n_i\Z$ has order equaling  the exponent of the group by Lemma \ref{cyclicexponent}.

We restate Theorem \ref{maintheorem}  as follows.

\begin{thm}\label{cyclic case}
Suppose that $K/k$ is a cyclic extension of degree $n$. Recall that $X^\circ_{K/k,P}\subset\Af^{n+1}$ is the affine hypersurface defined over $k$ by the equation
\begin{equation}\tag{\ref{norm equation}}
    \N_{K/k}(\mathbf{z})=P(x).
\end{equation}
Then
    \begin{align*}
        \frac{\left\{ (\overline{t}_i)_{i=1}^m\in \bigoplus_{i=1}^m (\Z/n_i\Z) \mid \sum_{i=1}^m d_i t_i=0  \mod n\right\}}{((\overline{1} , \overline{1}, \ldots, \overline{1})_{i=1}^m)}&\to\overline{\br}_\textup{nr}(X^\circ_{K/k,P}) , \\
        \quad (\overline{t}_i)_{i=1}^m &\mapsto \pi_\eta^*\left(\chi_K, \prod_{i=1}^r P_i^{t_i}\right)
    \end{align*}
is an isomorphism of finite abelian groups. \

    Moreover, when $c$ is a norm for $K/k$, then the isomorphism factorises as a monomorphism to $\br_\textup{nr}(X^\circ_{K/k,P})$ composed with the canonical projection.
\end{thm}

\begin{rem}
This theorem generalises the result \cite{VAV-chatletp-fold}*{Theorem 3.2} of V\'arilly-Alvarado and B. Viray on Ch\^atlet $p$-folds (i.e. the case where $n=p$ is a prime number). We would like to point out that, for the case where the splitting field of at least one irreducible factor $P_i$ is not linear disjoint from $K/k$, the statement of \cite{VAV-chatletp-fold}*{Theorem 3.2} is not accurate or is not able to cover all the exceptional cases. Related detail in the proof was not given. But their conclusion does not compatible with neither previous result by A. Skorobogatov nor our Theorem \ref{cyclic case}. For example, suppose that all factors $P_1,\ldots,P_m$ (with $m\geq4$) are distinct monic irreducible degree $p$ polynomials such that
\begin{itemize}
\item $K\simeq k[x]/(P_1)\simeq\cdots\simeq k[x]/(P_{m-2})$,
\item both $k[x]/(P_{m-1})$ and  $k[x]/(P_m)$ are  linear disjoint from $K$ over $k$.
\end{itemize}
The conclusion of \cite{VAV-chatletp-fold}*{Theorem 3.2} implies that the order of the Brauer group is $p^{m-2}$. However, in this case we have $n_1=\cdots=n_{m-2}=1$ and $n_{m-1}=n_m=p$, our Theorem \ref{cyclic case} implies that the Brauer group is $\Z/p\Z$ . When $n=p=2$, the Brauer group has already been computed by A. Skorobogatov in \cite{Skbook}. After the reduction in the first paragraph of \cite{Skbook}*{Section 7.1},  \cite{Skbook}*{Proposition 7.1.2} asserts that the Brauer group is generated by classes of quaternion algebras $\left(b,P_{m-1}\right)$ and $\left(b,P_m\right)$. But these represent the same class of order $2$, hence the Brauer group is $\Z/2\Z$.
\end{rem}

\begin{rem}\label{factorisation-determine-Br}
We see from Theorem \ref{maintheorem} that the unramified Brauer group of the equation \eqref{norm equation} is vertical once a very good model exists. From Theorem \ref{cyclic case}, we see that for cyclic extension $K/k$ the structure of the unramified Brauer is determined by the numbers $m$ and $(n_i)_{1\leq i\leq m}$ provided that $n=[K:k]$ divides the degree $d_i=\deg(P_i)$. The number $m$ is the number of factors of $P=\prod_iP_i$. Whereas the number $n_i$ is determined by the irreducible factorisation of $P_i$ over the extension $K$: the number of factors of $P_i=\prod_jP_{i,j}$ is $\frac{n}{n_i}$. Though different factors $P_i$ may have different residue fields $k_i=k[x]/(P_i)$ but they may have the same number $n_i=|\Gal(Kk_i/k_i)|$.
This factorisation information determines the structure of the Brauer group.
\end{rem}

\subsection{Cyclic normic equations of prescribed unramified Brauer groups}

It turns out that all finite abelian groups can be realised as the unramified Brauer group of a certain equation of the form \eqref{norm equation}.

\begin{thm}\label{cyclic construction}
    Let $B$ be a finite abelian group of exponent dividing $n$.
    Let $k$ be a number field containing a primitive $n$-th root of unity.

    Then there exist a cyclic extension $K/k$ of degree $n$ and a monic polynomial $P\in k[x]$ of degree divisible by $n$ such that there exists a monomorphism
    $$B\To\br_\textup{nr}(X^\circ_{K/k,P})$$ inducing an isomorphism
    $$B\simeq\overline{\br}_\textup{nr}(X^\circ_{K/k,P})$$ of finite abelian groups.
\end{thm}

\begin{proof}
We fix a primitive $n$-th root of unity $\zeta=\zeta_n\in k$. Choose an element $a\in k$ such that $f=x^n-a\in k[x]$ is irreducible. Define $K$ to be the cyclic extension $k[x]/(f)$. We fix a root $\sqrt[n]{a}$ of $f$ in $K$.

Write  $B$ as a direct sum $\bigoplus_{i=1}^m \Z/n_i\Z$ with $n_i\mid n$. In addition, we set $n_0=n$ and consider $\Z/n_0\Z\oplus B$.  For $0\leq i\leq m$, we aim to find distinct  polynomials $P_i\in k[x]$ such that
\begin{itemize}
\item $P_i$ is monic irreducible,
\item $d_i=\deg(P_i)=n$,
\item $[Kk_i:k_i]=n_i$ where $k_i=k[x]/(P_i)$.
\end{itemize}
Once this is achieved,  for the affine hypersurface $X^\circ_{K/k,P}\subset\Af^{n+1}$ defined by the equation \eqref{norm equation} with $P=\prod_{i=0}^mP_i$, Theorem \ref{cyclic case} asserts that (note that the constraint $\sum_{i=0}^md_it_i=0\mod n$ is superfluous since $d_i=n$)
$$\frac{\Z/n_0\Z\oplus B}{((\overline{1} , \overline{1}, \ldots, \overline{1})_{i=0}^m)}{\simeq} \overline{\br}_\textup{nr}(X^\circ_{K/k,P}).$$
It is clear that the diagonal homomorphism
\begin{align*}
\delta:~\mbox{ }~\Z/n\Z&\to\Z/n_0\Z\oplus B\\
t~\textup{mod}~ n&\mapsto (t~\textup{mod}~ n_i)_{i=0}^m
\end{align*}
has a retraction $\textup{pr}_0:(t_i\mod n_i)_{i=0}^m\mapsto t_0\mod n$. Therefore
$$\frac{\Z/n_0\Z\oplus B}{((\overline{1} , \overline{1}, \ldots, \overline{1})_{i=0}^m)}=\frac{\Z/n_0\Z\oplus B}{\im(\delta)}\simeq\ker(\textup{pr}_0)=B,$$
which allows us to conclude.

It remains to construct a suitable polynomial $P_i$ for each given positive integer $n_i\mid n$ with $0\leq i\leq m$, and make sure that they are distinct to each other.

Take $Q_i=x^{n_i}-u_i\in k[x]$ such that $Q_i-(\sqrt[n]{a})^{n_i}$ is irreducible over $K$. The existence of such a $u_i\in k$ is guaranteed by the forthcoming Lemma \ref{construct irreducible}.  Set $P_i=Q_i^\frac{n}{n_i}-a$, then $\deg(P_i)=n$ and it is monic. Moreover,  we can choose distinct $u_i\in k$ to make sure that these $P_i$ are distinct. Each $P_i$ is irreducible over $k$ by Lemma \ref{irreducibility} applied with $L=K$, $\theta=(\sqrt[n]{a})^{n_i}$, $h=Q_i$ and $g=x^\frac{n}{n_i}-a$, where $g$ is irreducible over $k$ since $f=x^n-a$ is irreducible.
Over $K$, we have the following factorization
$$P_i=Q_i^\frac{n}{n_i}-a=(Q_i-\zeta^{n_i}(\sqrt[n]{a})^{n_i})(Q_i-\zeta^{2n_i}(\sqrt[n]{a})^{n_i})\cdots(Q_i-\zeta^{(\frac{n}{n_i}-1)n_i}(\sqrt[n]{a})^{n_i})(Q_i-(\sqrt[n]{a})^{n_i})$$
As a $\Gal(K/k)$-conjugate of $Q_i-(\sqrt[n]{a})^{n_i}$, each term $P_{i,j}=Q_i-\zeta^{jn_i}(\sqrt[n]{a})^{n_i}$ for $1\leq j\leq\frac{n}{n_i}$ on the right hand side is irreducible over $K$.

If we denote $k_i=k[x]/(P_i)$, Remark \ref{f-factorization} says that $f=x^n-a$ factorises over $k_i$
as $f=\prod_{j=1}^{\frac{n}{n'_i}}f_j$ with irreducible factors $f_j$ all of degree $n'_i=[Kk_i:k_i]$.
To complete the proof, it suffices to show that $n_i=n'_i$. We compute $K\otimes_kk_i$ in two ways as follows.
\begin{align*}
K\otimes_kk_i&=K\otimes_k\left(k[x]/(P_i)\right)\simeq K[x]/(P_i)\simeq\prod_{j=1}^{\frac{n}{n_i}}K[x]/(P_{i,j})\\
K\otimes_kk_i&=\left(k[x]/(f)\right)\otimes_kk_i~\simeq~ k_i[x]/(f)~\simeq\prod_{j=1}^{\frac{n}{n'_i}}k_i[x]/(f_{j})
\end{align*}
As both $k$-algebras are finite products of field extensions of $k$, the number of factors must equal, i.e. $n_i=n'_i$.
\end{proof}

We close this section by the following lemma, the simple version of which has already applied  for the existence of certain irreducible polynomials in the previous proof.

\begin{lem}\label{construct irreducible}
Let $k$ be a number field containing a primitive $n$-th root of unity and $K=k(\sqrt[n]{a})$ be the cyclic extension defined by irreducible polynomial $f=x^n-a\in k[x]$.

Then, for any positive integer $d\mid n$, there exist infinitely many elements $u\in k$ such that $x^d-u-(\sqrt[n]{a})^d\in K[x]$ is irreducible.

    Moreover, for a given positive integer $m$, let $\Omega_k^{a,m,n}\subset\Omega_k$ be the finite subset of non-archimedean places $v$ such that
    \begin{itemize}
    \item[\textup{(v1)}] $\mathrm{ord}_v(a)=1$,
    \item[\textup{(v2)}] $\mathrm{char}(\F_v)>nm+2$.
    \end{itemize}
    Then we can require $u$ to satisfy in addition that for all $v\in\Omega_k^{a,m,n}$
    \begin{itemize}
    \item[\textup{(u1)}] $\mathrm{ord}_v(u)=0$,
    \item[\textup{(u2)}] the reduction $\mathrm{mod}~v$ of the equation $x^d-u=0$ has roots in $\overline{\F}_v^*$ outside any given finite subset of at most $nm$ elements.
    \end{itemize}

\end{lem}

\begin{rem}
The second part (the advanced version) of the lemma is applied only in the next section. Though the subset $\Omega_k^{a,m,n}$ may be empty in general, it is not the case in our application.
\end{rem}

\begin{proof}
By Chebotarev's density theorem, we choose a non-archimedean place $w$ of $k$ which splits completely in $K$.
Let $w'$ be a place of $K$ lying over $w$. As $k$ is dense in $k_w=K_{w'}$, there exist an element $u\in k$ such that $\mathrm{ord}_{w}(u+(\sqrt[n]{a})^d)=1$. The polynomial $x^d-u-(\sqrt[n]{a})^d$ is irreducible over $K$ by Eisenstein's criterion.
Removing any finite subset from $k$ does not change the relevant density required in the argument, so infinitely many such $u$ exist.

Moreover, when we choose the place $w$ we can require in addition that $\mathrm{ord}_w(a)=0$, then $w\notin\Omega_k^{a,m,n}$. For any finite subset $k_0$ of $k$, the image of the diagonal embedding
$$k\setminus k_0\To\prod_{v\in\Omega_k^{a,m,n}\cup\{w\}}k_v$$
is dense by weak approximation. Denote by $E_v\subset\overline{\F}_v$ an exceptional subset of at most $nm$ elements. Then $E_v^d=\{e^d\mid e\in E_v\}$ has at most $nm$ elements and therefore $\F_v^*\setminus E_v$ is non-empty. Once the reduction of $u$ $\mathrm{mod}~v$ lies in $\F_v^*\setminus E_v$, the condition \textup{(u2)} is satisfied. Then \textup{(u2)} together with \textup{(u1)} defines an open subset of $k_v$ for each $v$, we obtain infinitely many elements $u\in k$ satisfying both conditions by weak approximation.
\end{proof}

\section{Brauer classes precisely obstructing the Hasse principle}\label{section maintheorem2}

In this last section, we discuss the existence of the Brauer--Manin obstruction to the Hasse principle on very good models of the cyclic normic equation \eqref{norm equation}. The main result of this paper is the following theorem.

\begin{thm}\label{thm2}
Let $B$ be a finite abelian group of exponent dividing $n$ and   $0\neq B_0\subset B$ be a non-zero subgroup.
Let $k$ be a number field containing a primitive $n$-th root of unity.
Then there exists a rationally connected variety $X$ defined over $k$ satisfying the following conditions
\begin{enumerate}
  \item there is a monomorphism $B\To\br(X)$ inducing an isomorphism $B\simeq\overline{\br}(X)$ of finite abelian groups,
  \item the subgroup $B_0$ precisely obstructs the Hasse principle, i.e. $X(\mathbf{A}_k)^{B'}=\varnothing $ for a subgroup $B'\subset B$ if and only if $B_0\subset B'$.
\end{enumerate}
\end{thm}
In particular, when $B_0=B$ we obtain an immediate corollary, which generalises \cite{Berg-etal}*{Theorem 1.1}.

\begin{cor}
Let $B$ be a non-zero finite abelian group of exponent dividing $n$.
Let $k$ be a number field containing a primitive $n$-th root of unity.

Then there exists a rationally connected variety $X$ defined over $k$ such that its Brauer group $\br(X)$ contains $B$ as a subgroup which maps isomorphically onto $\overline{\br}(X)$. Moreover,  the whole Brauer group is required to obstruct the Hasse principle, i.e. $X(\mathbf{A}_k)^B=\varnothing $ while $X(\mathbf{A}_k)^{B'}\neq \varnothing $ for any proper subgroup $B'\subsetneq B$.
\end{cor}

We are going to extend the argument of \cite{Berg-etal}*{Theorem 1.1} to prove Theorem \ref{thm2}.

\subsection{Reinterpretation of the Brauer--Manin obstruction}

Let $X$ be a smooth, projective and geometrically integral variety over a number field $k$.
For a finite subgroup $B\subset \br(X)$ we denote by $\widehat{B}=\mathrm{Hom}(B,\Q/\Z)$ its Pontryagin dual.
For each place $v\in \Omega_k$, we define a map
\begin{align*}
  \lambda_v:X(k_v)&\rightarrow\widehat{B}\\
  P_v&\mapsto \mbox{``}b\mapsto \inv_v (b(P_v))\mbox{''}
\end{align*}
where the evaluation $b(P_v)$ of a Brauer element $b$ is its image by the induced homomorphism $P_v^*:\br(X)\to\br(k_v)$. Denote by $\Lambda_v=\Lambda_v(X,B)$ the image $\im(\lambda_v)$. By a standard good reduction argument, the subset $\Lambda_v$ is $\{0\}$ for all but finitely many places $v$. Thus we are able to define a map $\lambda=\sum_{v\in\Omega_k}\lambda_v:X(\mathbf{A}_k)\rightarrow \widehat{B}$ and a subset $\Lambda=\Lambda(X,B)=\im(\lambda)\subset\widehat{B}$.
We have the following criteria for the existence of  Brauer--Manin obstruction.

\begin{lem}\label{criteriaBM}
Let $B_0$ and $B'$ be subgroups of $B$.
\begin{enumerate}
      \item The Brauer--Manin subset $X(\Ad_k)^B=\varnothing $ if and only if $0\notin \Lambda$.
      \item The Brauer--Manin subset $X(\Ad_k)^{B'}=\varnothing $ if and only if $\Lambda\cap \ker(\widehat{B}\rightarrow \widehat{B}')=\varnothing$.
      \item\label{criteriaBM3} If $\Lambda=\widehat{B}\setminus \ker(\widehat{B}\rightarrow\widehat{B}_0)$, then $X(\Ad_k)^{B'}=\varnothing $ if and only if $B_0\subset B'$.
\end{enumerate}
\end{lem}

\begin{proof}
(1) is copied from \cite[Lemma 3.1(a)]{Berg-etal}.

(2) is \cite[Lemma 3.1(b)]{Berg-etal} except for the case where $B'=B$ which has been dealt with in (1).

We claim that $\ker(\widehat{B}\rightarrow \widehat{B}')\subset \ker(\widehat{B}\rightarrow\widehat{B}_0)$ if and only if $B_0\subset B'$.

Indeed, if there exists an element $b\in B_0\setminus B'$,  we can choose a homomorphism $\bar{f}:B/B'\to\Q/\Z$ such that $\bar{f}(b~\mathrm{mod}~B')\neq0$. Let $f$  be the composition of $\bar{f}$ and the canonical projection $B\to B/B'$, then $f\in\ker(\widehat{B}\rightarrow \widehat{B}')\setminus \ker(\widehat{B}\rightarrow\widehat{B}_0)$ by definition. Conversely, if there exists an element $f\in\ker(\widehat{B}\rightarrow \widehat{B}')\setminus \ker(\widehat{B}\rightarrow\widehat{B}_0)$, then  $f:B\to\Q/\Z$ is such that $f_{|B'}=0$ but $f_{|B_0}\neq0$. Take $b\in B_0$ with $f(b)\neq0$, then $b\in B_0\setminus B'$.

(3) follows immediately from the claim.
\end{proof}

\subsection{A refinement}

The key observation Lemma \ref{criteriaBM}(\ref{criteriaBM3}) together with Theorem \ref{cyclic construction} leads  to a proof of Theorem \ref{thm2}: during the construction of the cyclic normic hypersurface $X^\circ_{K/k,P}$ in the proof of Theorem \ref{cyclic construction}, it suffices to make sure in addition that the corresponding subset $\Lambda$ of its very good model $X$ satisfies $\Lambda=\widehat{B}\setminus \ker(\widehat{B}\rightarrow\widehat{B}_0)$ . But it turns out that we can prove a bit more: we are able to control the evaluation of the Brauer elements at finitely many places. We generalise \cite[Theorem 7.1]{Berg-etal} to cyclic normic hypersurfaces defined by the equation \eqref{norm equation}.

\begin{thm}\label{mainprop}
Let $B$ be a finite abelian group of exponent dividing $n$.
Let $k$ be a number field containing a primitive $n$-th root of unity.
Let $\Lambda_1,\ldots,\Lambda_r$ be non-empty subsets of $\widehat{B}$.

Then there exist non-archimedean places $w_1,\ldots, w_r$ of $k$ not dividing $n$
and a rationally connected $k$-variety $X$ which is a smooth projective model of the cyclic normic equation \eqref{norm equation} such that
  \begin{enumerate}
  \item there is a monomorphism $B\To\br(X)$ inducing an isomorphism $B\simeq\overline{\br}(X)$,
  \item for $t=1,\ldots,r$, the image $\im(\lambda_{w_r}:X(k_{w_r})\rightarrow \widehat{B})=\Lambda_r$,
  \item for all places $v\notin\{w_1,\ldots,w_r\}$, the image $\im(\lambda_{v}:X(k_{v})\rightarrow \widehat{B})=\{0\}$.
  \end{enumerate}
In particular, the variety $X$ has rational points locally everywhere.
\end{thm}
\begin{proof}[Proof of Theorem \ref{thm2} assuming Theorem \ref{mainprop}]
We have $\ker(\widehat{B}\rightarrow \widehat{B}_0)=\widehat{B/B_0}$ by left exactness of $\mathrm{Hom}(-,\Q/Z)$.
Then the condition that $B_0\neq0$ implies $\ker(\widehat{B}\rightarrow \widehat{B}_0)\subsetneq\widehat{B}$ by counting.
Applying Theorem \ref{mainprop}  with $r=1$ and $\Lambda_1=\widehat{B}\setminus \ker(\widehat{B}\rightarrow \widehat{B}_0)\neq\varnothing$, we obtain $\Lambda=\sum_{v\in\Omega_k}\im(\lambda_v)=\Lambda_1$. The proof is completed by Lemma \ref{criteriaBM}(\ref{criteriaBM3}).
\end{proof}

The rest aims to prove Theorem \ref{mainprop}.

\subsection{Preparation for the proof}
\begin{lem}[variant of \cite{Berg-etal}*{Proposition 4.1}]\label{finitefield}
  Let $m$ be positive integers.
  Let $\mathbb{F}$ be a finite field.
  For $0\leq i\leq m$, let $q_i\in\mathbb{F}[x]$ be a polynomial of degree $n_i$ such that $q=\prod_{i=0}^{m}q_i$ is separable. Assume that $\mathrm{char}(\mathbb{F})$ does not divide $N=\prod_{i=0}^mn_i$ and that $|\F|$ is sufficiently large so that  
$$|\mathbb{F}|+1-2g\sqrt{|\mathbb{F}|}>(1+\sum_{i=0}^{m}n_i)N$$
where $g=1+\frac{1}{2}(\sum_{i=0}^{m} n_i-m-3)N$.

Then for an arbitrary family of constants $(\varepsilon_i)_{i=0}^m\in\prod_{i=0}^m\mathbb{F}^*$, there exists an element $\bar{u}\in\F$ such that $q_i(\bar{u})\varepsilon_i\in \mathbb{F}^{* n_i}$ for all $i$.
\end{lem}

\begin{proof}
 Consider the projective variety $C\subset\PP^{m+2}$ defined over $\F$ by a system of homogeneous equations $y_i^{n_i}=\varepsilon_i q_i(x/z)z^{n_i}$ for $i=0,\ldots,m$. Under our assumption on the separability of $q$ and the divisibility of $N$,  the variety $C$ is a smooth complete intersection of dimension $1$ by jacobian criterion. It must be a geometrically integral curve. By computing the degree of the canonical bundle, we see that $C$ is of genus $g=1+\frac{1}{2}(\sum_{i=0}^{m} n_i-m-3)N$.
  
  The map $(x:y_0:\cdots:y_m:z)\mapsto (x:z)$ defines a dominant rational map from the curve $C$ to $\PP^1$ which extends to a morphism $\pi$ between projective algebraic curves. It is a ramified covering of $\PP^1$ of degree $N$. Consider the closed subscheme $Z=\mathrm{Spec}(\F[x]/(q))\cup\{\infty\}$ of $\PP^1$. The conditions $q_i(\bar{u})\varepsilon_i\in \mathbb{F}^{* n_i}$ are simultaneously satisfied for all $i$ once $(\bar{u}:y_0:\cdots:y_m:z)$ is the coordinates of a certain $\F$-point of $C\setminus\pi^{-1}(Z)$. If $|C(\mathbb{F})|>(1+\sum_{i=0}^{m}n_i)N\geq |\pi^{-1}(Z)(\F)|$, then such $\bar{u}\in\F$ exists. We conclude by the Hasse--Weil bound.
\end{proof}

\begin{prop}[variant of \cite{Berg-etal}*{Proposition 7.3}]\label{surj}
  Let $B$ be a finite abelian group of exponent dividing $n$ and $k$ be a number field containing a primitive $n$-th root of unity.
  Then there exist non-archimedean places $w_1,\ldots,w_r$ of $k$ not dividing $n$ and a rationally connected $k$-variety $X'=X'_{k(\sqrt[n]{a})/k,P}$ which is a very good model of the cyclic normic equation \eqref{norm equation} such that 
  \begin{enumerate}
      \item there is a monomorphism  $B\To\br(X')$ inducing an isomorphism $B\simeq \overline{\br}(X')$;
      \item all archimedean completions of  $k$ contain $\sqrt[n]{a}$;
      \item the induced map $\lambda'_{w_t}:X'(k_{w_t})\rightarrow\widehat{B}$ is surjective for all $1\leq t\leq r$;
      \item the image $\im(\lambda'_v:X'(k_{v})\rightarrow\widehat{B})$ contains $0$ for all places $v$ of $k$.
  \end{enumerate}
\end{prop}

\begin{proof}
  Write $B$ as a direct sum $\bigoplus_{i=1}^{m}\Z/n_i\Z$ with $n_i\mid n$. In addition, we set $n_0=n$.
  
  Let $w_1,\ldots,w_r$ be non-archimedean places of $k$ not dividing $n$ such that $|\mathbb{F}_{w_t}|>nm+2$ is sufficiently large such that the inequality of Lemma \ref{finitefield} holds. 

  By weak approximation, we choose an element $a\in k^*$ such that 
  \begin{itemize}
  \item all archimedean completions of $k$ contain a fixed (then all) $n$-th root(s) $\sqrt[n]{a}$ i.e. the conclusion (2), for instance it suffices to require $a$ to be totally positive when $n$ is even;  
  \item the $w_t$-adic valuation $\textup{ord}_{w_t}(a)=1$ for all $1\leq t\leq r$.
  \end{itemize}
  Then $x^n-a\in k[x]$ is irreducible by Eisenstein's criterion.
  We fix a primitive $n$-th root of unity $\zeta=\zeta_n\in k$ and set $K=k(\sqrt[n]{a})$. The character $\chi_K$ of $G=\Gal(K/k)$ defined at the beginning of Section \ref{Brnr-of-cyclicnormic} coincides with the character $\chi_a$ defined in Section \ref{local-inv} with $\sigma:\sqrt[n]{a}\mapsto\zeta\sqrt[n]{a}$. The Brauer classes defined respectively are related as follows: the one $\left(\chi_a,b\right)$  in Section \ref{local-inv} is the evaluation at a certain local rational point $\Theta_v$ of $X'$ of the one $\pi_\eta^*\left(\chi_K,Q\right)$ in Section \ref{Brnr-of-cyclicnormic} for a certain rational function $Q\in k(x)$, they are identified via the substitution $b=Q(x(\Theta_v))$ of the $x$-coordinate of $\Theta_v$ if it is not a zero of $Q$.

  Now we run the proof of Theorem \ref{cyclic construction} to carefully choose a  monic polynomial $P'$, and we take a very good model $X'$  given by \cite[Theorem 1.1]{VAV-model} of the cyclic normic equation  \eqref{norm equation}. The conclusion (1) follows as argued in Theorem \ref{cyclic construction}.
  
  Recall that $P'$ has irreducible factorisation $P'=\prod_{i=0}^mP'_i$ with $P'_i={Q'}_i^{\frac{n}{n_i}}-a$ where $Q'_i=x^{n_i}-u_i\in k[x]$.
  
  It remains to prove the conclusions (3) and (4), where the evaluation of Brauer classes in $B$ has to be taken into account. According to Theorems \ref{cyclic case} and \ref{cyclic construction}, the image of $B$ in $\br(X')$ is generated by classes of cyclic algebras $\pi_\eta^*\left(\chi_a,P'_i\right)$ with $1\leq i\leq m$. 
  
  The reciprocal polynomial $\widetilde{P}'$ of $P'$ has constant term $1$ which is trivially a norm for $K/k$. According to Remark \ref{cyclic model}, the model $X'$ has a $k$-rational point $\Theta$ lying over $\infty\in\mathbb{P}^1$. Note that $\deg(P'_i)=n$, the reciprocal polynomial $\widetilde{P}'_i(x)=P'_i(\frac{1}{x})x^n$ has constant term $1$ since $P'_i$ is monic. We see that the Brauer class $\left(\chi_a,P'_i\right)=\left(\chi_a,\widetilde{P}'_i(\frac{1}{x})x^n\right)=\left(\chi_a,\widetilde{P}'_i(\frac{1}{x})\right)$ since the character $\chi_a$ is of order dividing $n$. Because the $x$-coordinate of $\Theta$ is $\infty$, the evaluation of $\pi_\eta^*\left(\chi_a,P'_i\right)$ at $\Theta\in X'(k_v)$ equals $\left(\chi_a,\widetilde{P}'_i(\frac{1}{\infty})\right)=\left(\chi_a,1\right)=0\in\br(k_v)$. This indicates that the image of $\Theta$ by the map $\lambda'_v:X'(k_v)\to\widehat{B}$ is $0$, from which the conclusion (4) follows.  
  
  To reach the conclusion (3), while running the proof of Theorem \ref{cyclic construction}, we need to require $Q'_i$ to satisfy extra conditions
  \begin{itemize}
  \item for $0\leq i\leq m$ and $1\leq t\leq r$, the element $u_i\in \mathcal{O}_{k_{w_t}}$;
  \item the reduction $\mathrm{mod}~w_t$ of the polynomial $Q'=\prod_{i=0}^m Q'_i$ is separable over $\F_{w_t}$.
  \end{itemize}
  These are  done by replacing the application of Lemma \ref{construct irreducible} by its advanced version, where we should notice that $\Omega^{a,m,n}_k$ contains $\{w_t\mid 1\leq t\leq r\}$ by our choice of $a$ and the lower bound of $|\F_{w_t}|$.

We are about to prove the surjectivity of the maps $\lambda'_{w_t}:X'(k_{w_t})\to\widehat{B}$ for $1\leq t\leq r$. We write $v=w_t$ to simplify the notation. Belonging to the image in $\br(X')$ of the direct factor $\Z/n_i\Z\subset B$, the Brauer class $\left(\chi_a,P'_i\right)$ is of order dividing $n_i$. For any $\beta\in\widehat{B}$, we denote $\tau_{i,\beta}=\beta(\left(\chi_a,P'_i\right))\in \frac{1}{n}\Z/\Z\subset\Q/\Z$, which is also of order dividing $n_i$. We look for $\Theta_v\in X'(k_v)$ such that $\inv_v((\pi_\eta^*\left(\chi_a,P'_i\right))(\Theta_v))=\tau_{i,\beta}$, which implies $\lambda'_v(\Theta_v)=\beta$ since $\{\pi_\eta^*\left(\chi_a,P'_i\right)\mid 1\leq i\leq m\}$ generates $B$.

Let $\gamma$ be a generator of the cyclic group $\F_v^*$. Recall in Section \ref{local-inv} that $n$ divides $|\F_v|-1$ and $\log$ is an isomorphism, it follows that the element $\log(\gamma^{\frac{|\F_v|-1}{n}})$ generates $\frac{1}{n}\Z/\Z$. Let $t$ be an integer such that $\log(\gamma^{t\frac{|\F_v|-1}{n}})=\tau_{i,\beta}$. By comparing the order, we see that $n$ divides $tn_i$. For $1\leq i\leq m$, set $\varepsilon_i=\gamma^{\frac{tn_i}{n}}\in\F_v^*$, then $$\log((\varepsilon_i^{\frac{n}{n_i}})^{\frac{|\F_v|-1}{n}})=\tau_{i,\beta}.$$
Let $\varepsilon_0\in \mathbb{F}_v^*$ be such that $$\varepsilon_0\cdot\prod_{i=1}^{m}\varepsilon_i^{\frac{n}{n_i}}=1.$$

As $\mathrm{ord}_v(a)=1$ by the choice of $a$, the reduction $\overline{P'_i}=\overline{Q'_i}^{\frac{n}{n_i}}\in\mathbb{F}_v[x]$.
Lemma \ref{finitefield} applied to $\F=\F_v$ and $q_i=\overline{Q'_i}$ implies the existence of an element $u\in \mathcal{O}_{k_v}^\times$ such that its reduction $\bar{u}\in\F_v^*$ satisfies $\overline{Q'_i}(\bar{u})\varepsilon_i\in \mathbb{F}_v^{* n_i}$ for $0\leq i\leq m$. Then 
\begin{equation}\label{n-power}
\overline{P'_i}(\bar{u})\varepsilon_i^{\frac{n}{n_i}}\in \mathbb{F}_v^{* n}
\end{equation} 
and $P'_i(u)$ is a $v$-adic unit. By taking product for $0\leq i\leq m$, we get $\overline{P'}(\bar{u})\in \mathbb{F}_v^{\times n}$, then $P'(u)\in \mathcal{O}_{k_v}^{\times n}$ by Hensel's lemma. As $P'(u)$ is a norm, there exists a local point $\Theta_v\in X'(k_v)$ having $u$ as its $x$-coordinate.
Then 
\begin{align*}
\inv_v((\pi_\eta^*\left(\chi_a,P'_i\right))(\Theta_v))&=\inv_v(\left(\chi_a,P'_i(u)\right))\\
&=\log(\left(\frac{1}{\overline{P'_i}(\bar{u})}\right)^{\frac{|\F_v|-1}{n}})\\
&=\log((\varepsilon_i^{\frac{n}{n_i}})^{\frac{|\F_v|-1}{n}})\\
&=\tau_{i,\beta}
\end{align*}
where the second equality follows from Proposition \ref{inv} and the third follows from \eqref{n-power}. This completes the proof of (4).
\end{proof}

\subsection{Proof of the refinement}

Finally, we are able to adapt the idea of the proof of \cite{Berg-etal}*{Theorem 7.1} to our context.

\begin{proof}[Proof of Theorem \ref{mainprop}]

  Proposition \ref{surj} gives us an element $a\in k^*$, non-archimedean places $w_1,\dots,w_r$, and a very good model $\pi':X'\to\PP^1$ satisfying the conclusions (1)(2)(3)(4) of Proposition \ref{surj}. In particular, the set of local rational points $X'(k_v)$ is non-empty for all places $v$ and therefore $\pi'(X'(k_v))$ contains a non-empty $v$-adic open set of $\PP^1(k_v)$.
  
  The strategy is to find a suitable morphism $h:\PP^1\to\PP^1$ and define $X$ to be the pull-back of $\pi'$ along $h$. Then $h$ will help us to control the images $\im(\lambda_v:X(k_v)\to\widehat{B})$ which is naturally contained in $\im(\lambda'_v:X'(k_v)\to\widehat{B})$.
  
  \textbf{Reduction on non-archimedean places.}
  \begin{itemize}
  \item[]
  By a standard good reduction argument, there are only finitely many places $v$ such that $\lambda'_v:X'(k_v)\to\widehat{B}$ is possibly non-zero.  Without lost of generality, we may assume that they are $w_1,\dots,w_r,w_{r+1},\ldots,w_s$, which are non-archimedean according to Proposition \ref{surj}(2). For $r<t\leq s$, we set $\Lambda_t=\{0\}$. Then for $1\leq t\leq s$, we have $\Lambda_t\subset\im(\lambda'_{w_t})$. Moreover, we have $\im(\lambda'_{v})=\{0\}$ for any $v\in\Omega_k\setminus\{w_1,\ldots,w_s\}$. 
  
  In order to prove Theorem \ref{mainprop}, it suffices to prove it for $s$ in place of $r$. 
  \end{itemize}

  Denote by $Z'=\Spec(k[x]/(P'))\subset\PP^1$ the degeneracy locus of $\pi':X'\to\PP^1$.   For $1\leq t\leq s$, define 
  \begin{align*}
    \mathcal{U}_{w_t}& =\pi'(\lambda'^{-1}_{w_t}(\Lambda_t)\cap (X'\setminus \pi^{-1}(Z'))(k_{w_t})) \\
     &=\bigcup_{\lambda\in\Lambda_t}\mathcal{U}_{w_t,\lambda}
  \end{align*}
  where $\mathcal{U}_{w_t,\lambda}=\pi'(\lambda'^{-1}_{w_t}(\lambda)\cap (X'\setminus \pi^{-1}(Z'))(k_{w_t}))\subset \PP^1(k_{w_t})$ is a $w_t$-adically open and non-empty subset since the evaluation of a Brauer class is locally constant. A $k$-rational point $c\in\PP^1\setminus Z'$ lies in $\mathcal{U}_{w_t}$ signifies that the fiber $X'_c$ has at least one $k_{w_t}$-point $\Theta_{w_t}$ and $\lambda'_{w_t}(\Theta_{w_t})\in\Lambda_t$.

  \textbf{Reduction on the fiber at infinity.} 

  \begin{itemize}
  \item[]
  Without lost of generality, we may assume that the point $\infty\in\mathcal{U}_{w_t}$ for $1\leq t \leq s$, otherwise we may proceed as follows.

  By weak approximation, we choose $c\in\PP^1(k)\setminus\{\infty\}$ such that $c\in\mathcal{U}_{w_t}$ for $1\leq t \leq s$. The rational map $\phi:\PP^1\to\PP^1$ defined by $x\mapsto \frac{cx}{x+c}$ is an automorphism of $\PP^1$ mapping $\infty$ to $c$. We will replace $X'$ by its pull-back $X''=X'\times_{\P^1,\phi}\PP^1$ which is a smooth projective  model of the equation
  $$\N_{k(\sqrt[n]{a})/k}(\mathbf{z})=P'(\frac{cx}{x+c}),$$ 
  which is equivalent to 
  $$\N_{k(\sqrt[n]{a})/k}(\mathbf{z}(x+c)^{m+1})=P'(\frac{cx}{x+c})(x+c)^{n(m+1)}.$$  
  The polynomial on the right hand side $P''(x)=P'(\frac{cx}{x+c})(x+c)^{n(m+1)}$ has irreducible factorisations over $k$ and over $K$ of the same form as $P'$, which give rise to an isomorphism between Brauer groups by Remark \ref{factorisation-determine-Br}.
  A further  change of variable $\mathbf{z}\mapsto\mathbf{z}(x+c)^{m+1}$ does not make any affect on the computation of unramified Brauer groups since all the generators are vertical, i.e. independent of $\mathbf{z}$. We may even assume the defining equation of $X''$ to be
  $$\N_{k(\sqrt[n]{a})/k}(\mathbf{z})=P''(x).$$
  The polynomial $P''$ has  irreducible factorisation  $P''=\prod_{i=0}^m P''_i$ over $k$. Each $P''_i$ factors over $K=k(\sqrt[n]{a})$ into $\frac{n}{n_i}$  irreducible polynomials of the same degree  $P''_i=\prod_{j=1}^{\frac{n}{n_i}}P''_{i,j}$. This factorisation information is the same as $P'$.
  
  To continue the  proof, we still use the notation $X'$, $P'_i$, $P'_{i,j}$ (instead of $X''$, $P''_i$, $P''_{i,j}$). Now we have additionally $\infty\in\mathcal{U}_{w_t}$ for $1\leq t \leq s$, in particular $X'_\infty(k_{w_t})\neq\varnothing$.  
  The fiber $X'_\infty$ is $k_v$-rational and has $k_v$-points for real places $v$ since  $\sqrt[n]{a}\in k_v$.
  \end{itemize}
  
  For each pair $(i,j)$, we fix a root $\theta_{i,j}\in\overline{k}$ of the irreducible polynomial $P'_{i,j}\in K[x]$. By Chebotarev's density theorem, we choose distinct non-archimedean places $v_{i,j}\in\Omega_k\setminus\{w_1,\ldots,w_s\}$ such that $X'_\infty(k_{v_{i,j}})\neq \varnothing$ and such that both  $x^n-a$ and $P'_i$ split completely in $k_{v_{i,j}}$.
  Then $k_{v_{i,j}}$ contains $\theta_{i,1},\theta_{i,2},\ldots,\theta_{i,\frac{n}{n_i}}$, and $\sqrt[n]{a}$.

  There are only finitely many places $w'$ such that $X'_\infty(k_{w'})=\varnothing$, they must be non-archimedean places and distinct from those $w_t$ and $v_{i,j}$ by construction.

  Proposition \ref{p1p1} ensures that there exists a polynomial $h\in k[x]$, which is viewed also as an endomorphism of $\PP^1$, such that 
    \begin{itemize}
      \item[\textup{(h1)}] For each $0\leq i\leq m$ and each $1\leq j\leq\frac{n}{n_i}$, the polynomial $h-\theta_{i,j}\in k_{v_{i,j}}[x]$ is irreducible,
      \item[\textup{(h2)}] For each $1\leq t\leq s$, the image $h(\PP^1(k_{w_t}))\subset\mathcal{U}_{w_t}$ and it meets  $\mathcal{U}_{w_t,\lambda}$ for each $\lambda\in \Lambda_t$,
      \item[\textup{(h3)}] For each place $w'$ such that $X'_\infty(k_{w'})=\varnothing $, the image $h(\PP^1(k_{w'}))$ meets $\pi'(X'(k_{w'}))$ which contains a certain non-empty open set of $\PP^1(k_{w'})$. 
  \end{itemize}

  Define  $\pi:X\to\PP^1$ to be the pull-back of $\pi':X'\to\PP^1$ along $h$. It is a projective model of the equation 
  $$\N_{k(\sqrt[n]{a})/k}(\mathbf{z})=P'(h(x)).$$
  Lemma \ref{irreducibility} (applied with $g=P'_i$, $\theta=\theta_{i,j}$, and $L=k_{v_{i,j}}$) together with (h1) implies that $P'_i\circ h$ is irreducible over $k$, and therefore separable as well. Then it follows from the separability of $P'$ that $P'\circ h$ is also separable.
  For $0\leq i\leq m$ and $1\leq j\leq\frac{n}{n_i}$,  Lemma \ref{irreducibility} (applied with $g=P'_{i,j}$, $\theta=\theta_{i,j}$, and $L=k_{v_{i,j}}$) together with (h1) again implies that $P'_{i,j}\circ h$ is irreducible over $K$.
   In summary, the separable polynomial $P=P'\circ h$ has irreducible factorisation  $P'\circ h=\prod_{i=0}^m P'_i\circ h$ over $k$.
   Each $P'_i\circ h$ factors as $P'_i\circ h=\prod_{j=1}^{\frac{n}{n_i}}P'_{i,j}\circ h$ over $K$ into $\frac{n}{n_i}$  irreducible polynomials of the same degree.
   
   Theorem \ref{cyclic construction} together with Remark \ref{factorisation-determine-Br}  allows us to conclude that the composition 
  $$B\To\br_\textup{nr}(X')\buildrel{h^*}\over\To\br_\textup{nr}(X)$$ 
  is a monomorphism with image generated by $\{\pi^*_\eta\left(\chi_a,P'_i\circ h\right)\mid 1\leq i\leq m\}$, each of which is the image by $h^*$ of $\pi'^*_\eta\left(\chi_a,P'_i\right)$, and it induces an isomorphism onto $\overline{\br}_\textup{nr}(X)$. 
  
    The degeneracy locus $Z=\Spec(k[x]/(P))$ of $\pi$ is exactly the pull-back $h^{-1}(Z')$ of the degeneracy locus of $\pi'$. The separability of $P$ indicates that $h:\PP^1\to\PP^1$ is \'etale over $Z'$. Then the fiber product $X=X'\times_{\PP^1,h}\PP^1$ is smooth (and projective). The unramified Brauer groups above are indeed $\Br(X')$ and $\br(X)$. This proves the conclusion (1) of the theorem.

  It follows immediately from the condition (h3) that $X$ has $k_v$-points for all $v$.
  The conclusion (3) is a consequence of the fact that $\im(\lambda'_{v})=\{0\}$ for any $v\in\Omega_k\setminus\{w_1,\ldots,w_s\}$ together with the natural composition  
  $$\lambda_v:X(k_v)\buildrel{h}\over\rightarrow X'(k_v)\buildrel{\lambda'_v}\over\rightarrow \widehat{B}.$$
  This locally constant map is determined by the image of $(X\setminus\pi^{-1}(Z))(k_v)$. As Brauer classes in $B$ are vertical, it suffices to consider the set of $x$-coordinates of these $k_v$-points, i.e. $\pi((X\setminus\pi^{-1}(Z))(k_v))$. Its image by $h$ is described by the condition (h2) when $v\in\{w_t\mid1\leq t\leq s\}$, which completes the proof of the conclusion (2). 
\end{proof}

\begin{rem} 
In the proof,  we start from  a very good model $X'$ given by \cite{VAV-model} of the equation $\N_{k(\sqrt[n]{a})/k}(\mathbf{z})=P'(x)$. We do not prove that the pull-back $X$ of $X'$ by $h$ is  a very good  model of $\N_{k(\sqrt[n]{a})/k}(\mathbf{z})=P'(h(x))$. In fact, the smooth projective variety $X$ is exactly the model given by \cite{VAV-model}, hence it is a very good model. But we are not going to present the details. 

\end{rem}

\bigskip

\bigskip

\tableofcontents

\bigskip

\bibliographystyle{alpha}
\bibliography{mybib1}

\end{document}